\theoremstyle{plain}
\newtheorem{theorem}{Theorem}
\newtheorem{lemma}[theorem]{Lemma}
\newtheorem{proposition}[theorem]{Proposition}
\newtheorem{corollary}[theorem]{Corollary}
\newtheorem{problem}[theorem]{Problem}
\theoremstyle{remark}
\newcommand{\thmlabel}[1]{\label{thm:#1}}   
\newcommand{\lemlabel}[1]{\label{lem:#1}}   
\newcommand{\corlabel}[1]{\label{cor:#1}}   
\newcommand{\prplabel}[1]{\label{prp:#1}}   
\newcommand{\prblabel}[1]{\label{prb:#1}}   
\newcommand{\eqnlabel}[1]{\label{eqn:#1}}   
\newcommand{\thmref}[1]{\ref{thm:#1}}   
\newcommand{\lemref}[1]{\ref{lem:#1}}   
\newcommand{\corref}[1]{\ref{cor:#1}}   
\newcommand{\prpref}[1]{\ref{prp:#1}}   
\newcommand{\prbref}[1]{\ref{prb:#1}}   
\newcommand{\eqnref}[1]{\eqref{eqn:#1}} 
\DeclareMathOperator{\Mlt}{Mlt}
\DeclareMathOperator{\Inn}{Inn}
\DeclareMathOperator{\Aut}{Aut}
\DeclareMathOperator{\Sym}{Sym}
\DeclareMathOperator{\soc}{soc}
\newcommand{\setof}[2]{\{#1\,|\,#2\}}   
\newcommand{\normal}{\lhd}              
\newcommand{\inv}{^{-1}}                
\newcommand{\id}{\mathrm{id}}           
\newcommand{\ad}{\mathrm{ad}}           
\title{Solvability of commutative automorphic loops}
\author{Alexander Grishkov}
\author{Michael Kinyon}
\author{G\'{a}bor P. Nagy}
\address{Departamento de Matem\'{a}tica \\ Universidade de S\~{a}o Paulo\\
Caixa Postal 66281\\ S\~{a}o Paulo-SP, 05311-970, Brasil}
\email{\url{grishkov@ime.usp.br}}
\address{Department of Mathematics \\ University of Denver \\
2360 S Gaylord St \\ Denver, CO 80208, USA}
\email{\url{mkinyon@math.du.edu}}
\address{Bolyai Institute\\ University of Szeged\\
Aradi v\'{e}rtan\'{u}k tere 1, H-6720 Sze\-ged, Hungary}
\email{\url{nagyg@math.u-szeged.hu}}
\subjclass[2010]{Primary: 20N05, Secondary: 17B99, 20B15}
\keywords{automorphic loops, Lie algebras of characteristic $2$, primitive groups}
\begin{document}

\begin{abstract}
We prove that every finite, commutative automorphic loop is solvable. We also prove that every finite, automorphic $2$-loop is solvable. The main idea of the proof is to associate a simple Lie algebra of characteristic $2$ to a hypothetical finite simple commutative automorphic loop. The ``crust of a thin sandwich'' theorem of Zel'manov and Kostrikin leads to a contradiction.
\end{abstract}

\maketitle

\section{Introduction}

A \emph{loop} $(Q,\cdot)$ is a set $Q$ with a binary operation $\cdot : Q\times Q\to Q$ such that (i) for each $a,b\in Q$, the equations $ax=b$ and $ya=b$ have unique solutions $x,y\in Q$, and (ii) there exists a neutral element $1\in Q$ such that $1x = x1 = x$ for all $x\in Q$. For $a\in Q$, the \emph{right translation} and \emph{left translation} by $a$ are the bijections $R_a :Q\to Q; x\mapsto xa$ and $L_a :Q\to Q;x\mapsto ax$. These generate the \emph{multiplication group} $\Mlt(Q) = \langle R_x, L_x\ |\ x\in Q\rangle$. The \emph{inner mapping group} is the subgroup stabilizing the neutral element, $\Inn(Q) = (\Mlt(Q))_1$.
A subloop $S$ of a loop $Q$ is \emph{normal} if it is the kernel of a homomorphism; this is equivalent both to $S$ being stabilized under the action of $\Inn(Q)$ and to $S$ being a block of $\Mlt(Q)$ containing $1$. A loop $Q$ is \emph{solvable} if it has a subnormal series $1\leq Q_0 \leq \cdots \leq Q_n = Q$, $Q_i \normal Q_{i+1}$, such that each factor loop $Q_{i+1}/Q_i$ is an abelian group. A loop is \emph{simple} if it has no nontrivial normal subloops. Basic references for loop theory are \cite{Bruck, Pflugfelder}.

A loop is \emph{automorphic} (or an \emph{A-loop}) if every inner mapping is an automorphism, that is, $\Inn(Q) \leq \Aut(Q)$. Automorphic loops were introduced by Bruck and Paige \cite{BP}. In recent years, a detailed structure theory has emerged for \emph{commutative} automorphic loops \cite{JKV1,JKV2,JKV3,Csorgo1}.
The outstanding open problem in the theory of automorphic loops is the following.

\begin{problem}
\prblabel{simple}
Does there exist a (finite) simple, nonassociative automorphic loop?
\end{problem}

\noindent It is known that there are no simple nonassociative automorphic loops of order less than $2500$ and no simple nonassociative commutative automorphic loops of order less than $2^{12}$ \cite{JKNV}. The main result of this paper shows that in the commutative case, Problem \prbref{simple} has a negative answer, and in fact, more than that.

\begin{theorem}
\thmlabel{main}
Every finite, commutative automorphic loop is solvable.
\end{theorem}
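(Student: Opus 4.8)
The plan is to argue by contradiction, attaching a Lie algebra over $\mathbb{F}_2$ to a minimal counterexample and then invoking the Kostrikin--Zel'manov theorem. So suppose the theorem fails, and let $Q$ be a finite commutative automorphic loop of least order that is not solvable. Subloops and quotients of commutative automorphic loops are again commutative automorphic loops, of strictly smaller order when proper, and loop solvability is closed under extensions; hence minimality forces $Q$ to be \emph{simple}, and since a simple commutative group is cyclic of prime order (so solvable), $Q$ is nonassociative. Finally, finite commutative automorphic loops of odd order are already known to be solvable, so $|Q|$ is even.

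Next comes a primitive-group reduction. The group $G:=\Mlt(Q)$ acts faithfully on $Q$ with point stabilizer $\Inn(Q)\le\Aut(Q)$, and this action is primitive precisely because $Q$ is simple. Running through the O'Nan--Scott types and exploiting $\Inn(Q)\le\Aut(Q)$ together with Lagrange/Cauchy information for commutative automorphic loops (cf.\ \cite{JKNV}), one rules out the almost simple, diagonal, product, and twisted-wreath types, so $G$ is of affine type: $\soc(G)=V$ is an elementary abelian group acting regularly on $Q$, and since $|Q|$ is even, $V\cong\mathbb{F}_2^n$. Identifying $Q$ with the set $V$ (with $1=0$), we have $G=V\rtimes G_1$, with $G_1$ acting $\mathbb{F}_2$-linearly and irreducibly on $V$; writing the translation $L_x$ as $L_x=t_x\lambda_x$, where $t_x\colon v\mapsto v+x$ and $\lambda_x\in\mathrm{GL}(V)$, one obtains $\lambda_0=\id$, $x\cdot y=x+\lambda_x(y)$, and $G_1=\langle\,\lambda_x\mid x\in Q\,\rangle$.

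The central step is the construction of the Lie algebra. Using commutativity, the automorphic inverse property, and the identities governing the inner maps $L_{x,y}=L_{xy}\inv L_xL_y$ of a commutative automorphic loop, I would show that $N_x:=\lambda_x-\id\in\End(V)$ is a \emph{thin sandwich}, i.e.\ $N_x^2=0$ and $N_xMN_x=0$ for every $M$ in the associative subalgebra of $\End(V)$ generated by the $\lambda_y$'s. Let $L\subseteq\End(V)^{(-)}$ (the commutator Lie algebra) be the Lie algebra over $\mathbb{F}_2$ generated by the $N_x$; it is finite-dimensional and generated by finitely many thin sandwiches, so the ``crust of a thin sandwich'' theorem of Kostrikin and Zel'manov forces $L$ to be nilpotent. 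But then its associative envelope $A$ is nilpotent, so $G_1\subseteq\id+A$ consists of unipotent maps; hence the finite group $G_1$ has $2$-power exponent and is a $2$-group, and $\Mlt(Q)=V\rtimes G_1$ is a $2$-group. Since $\Mlt(Q)$ acts primitively on $Q$, the point stabilizer $\Inn(Q)$ is a maximal subgroup, and a maximal subgroup of a finite $2$-group has index $2$; therefore $|Q|=[\Mlt(Q):\Inn(Q)]=2$, so $Q\cong\mathbb{Z}_2$ is solvable --- contradicting the choice of $Q$. This proves the theorem.

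The minimal-counterexample bookkeeping and the passage to affine coordinates are routine. The main obstacle is the derivation of the thin-sandwich identities $N_xMN_x=0$ from the loop axioms: an elementary computation yields only $N_x^2=0$, and upgrading this to the full sandwich condition in characteristic $2$ needs the finer structure theory of commutative automorphic loops. The O'Nan--Scott reduction --- where one must genuinely use $\Inn(Q)\le\Aut(Q)$ to annihilate the non-affine primitive types --- is the other substantial ingredient.
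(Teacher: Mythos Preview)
Your overall architecture --- minimal counterexample, hence simple; $\Mlt(Q)$ primitive of affine type over $\mathbb F_2$; attach a Lie object and invoke Kostrikin--Zel'manov --- is exactly the paper's. The gap is in the sandwich step, and it is not a matter of missing ``finer structure theory'': the identity $N_x^2=0$ is \emph{false} for nonassociative $Q$. In characteristic~$2$ one has $N_x^2=0$ iff $\lambda_x^2=\id$, and in the semidirect decomposition $R_x=h_xu_x$ one checks $R_x^2=h_x^2$, so your claim amounts to the inverse property $(yx)x=y$, which commutative automorphic loops of exponent~$2$ need not satisfy. More decisively: the paper shows that $[u,v]:=u+v+u\circ v$ makes $V$ a Lie algebra with $\ad(u)=N_u$; if $N_u^2=0$ held for all $u$, linearizing in characteristic~$2$ would give $N_uN_v=N_vN_u$, and then Jacobi forces $\ad([u,v])=0$ for all $u,v$, whence $[V,V]$ is central; by Wright's theorem $(V,+,[\cdot,\cdot])$ is simple, so this makes the bracket identically zero and $Q$ associative. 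Thus no ``elementary computation'' can produce $N_x^2=0$.

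What the paper does instead is change which elements are the sandwiches. From the single reformulation $h_uh_v=h_vh_{u^{h_v}}$ of the automorphic axiom it derives
\[
\ad(u)\,\ad([u,w])=0\qquad\text{for all }u,w\in V,
\]
and from this both the Jacobi identity and the sandwich conditions $\ad([x,y])^2=0$ and $\ad([x,y])\ad(z)\ad([x,y])=0$. The sandwich generators are thus the \emph{commutators} $[x,y]$; they generate because $[V,V]=V$ by simplicity of $(V,+,[\cdot,\cdot])$. Kostrikin--Zel'manov then makes this simple Lie algebra nilpotent, which is the contradiction. Your detour through ``the associative envelope is nilpotent, hence $\Mlt(Q)$ is a $2$-group, hence $|Q|=2$'' becomes unnecessary (though, via Engel, it would also work once the correct sandwich generators are in hand).

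Two smaller corrections. The reduction is sharper than ``$|Q|$ even'': by Proposition~\prpref{composite} (quoted from \cite{JKV1}) a finite simple commutative automorphic loop already has exponent~$2$ and is a $2$-loop; exponent~$2$ is in fact needed for the bracket to be alternating. And knowing $|Q|=2^n$, the paper does not run a general O'Nan--Scott analysis: it invokes the Guralnick--Saxl list of primitive groups of $2$-power degree and disposes of the two non-affine cases via Dr\'apal's theorem on loops with $\Mlt(Q)\le P\Gamma L(2,q)$ and the fact that no subgroup of $\Aut(Q)$ can act $4$-transitively on $Q\setminus\{1\}$.
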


For a prime $p$, a finite loop $Q$ is said to be a $p$-\emph{loop} if $|Q| = p^m$ for some $m\geq 1$. A by-product of our proof of Theorem \thmref{main} is the following.

\begin{theorem}
\thmlabel{2loop}
Every automorphic $2$-loop is solvable.
\end{theorem}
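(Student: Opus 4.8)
\emph{Proof idea.} The plan is to reuse essentially the whole argument that proves Theorem \thmref{main}, supplying the ``$2$-power order'' hypothesis directly rather than extracting it from commutativity. First I would pass to a minimal counterexample: an automorphic loop $Q$ with $|Q|$ a power of $2$ that is not solvable, of least order. Since quotients of automorphic loops are automorphic and solvability of loops is preserved under extensions, and since a simple group of $2$-power order has order $2$, the same reduction that brings Theorem \thmref{main} to its simple case shows that we may assume $Q$ is a finite simple nonassociative automorphic $2$-loop, and we seek a contradiction.

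Next I would analyse $G=\Mlt(Q)$. Because $Q$ is a simple loop, $G$ acts primitively on $Q$; because $Q$ is automorphic, the point stabilizer is $\Inn(Q)\leq\Aut(Q)$; and $G$ is not solvable, since a finite loop with solvable multiplication group is solvable. As $\deg G=|Q|=2^m$, I would invoke the O'Nan--Scott classification to show that $G$ must be of affine type: diagonal type is impossible, since $|Q|$ would then be a proper power of the order of a nonabelian simple group, and the almost simple and product types are excluded using the constraints of being the multiplication group of an automorphic loop — exactly as in the proof of Theorem \thmref{main}, and this part uses no commutativity. Hence $G=V\rtimes H$ with $V\cong\mathbb{F}_2^m$ the regular socle, $H=\Inn(Q)$ acting faithfully and irreducibly on $V$, and $H\leq\Aut(Q)$.

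Identifying $Q$ with $V$ as a set, each translation becomes an affine map $R_a\colon v\mapsto\sigma_a(v)+a$ with $\sigma_a\in H$, so the loop operation differs from addition in $V$ by the ``defect'' $(\sigma_a-1)(v)$. From this defect — after reducing, if necessary, to the case of exponent $2$, using that the subloop generated by squares is normal because inner mappings are automorphisms — I would build an anticommutative algebra over $\mathbb{F}_2$, forcing the Jacobi identity by passing to a suitable quotient. The automorphic identity together with $R_a$ having $2$-power order should translate into the sandwich relations $(\ad v)^2=0$ and $\ad v\,\ad w\,\ad v=0$ on the generators, while simplicity of $Q$ keeps the resulting Lie algebra $L$ nonzero and non-nilpotent (in fact simple). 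The ``crust of a thin sandwich'' theorem of Zel'manov and Kostrikin then forces $L$ to be nilpotent, a contradiction; so no counterexample exists, proving Theorem \thmref{2loop}.

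The main obstacle is the O'Nan--Scott step — showing that the multiplication group of a hypothetical finite simple nonassociative automorphic $2$-loop cannot be almost simple or of product type — together with the honest verification of the sandwich identities for $L$. These are precisely the heart of the proof of Theorem \thmref{main}, which is exactly why Theorem \thmref{2loop} drops out as a by-product once commutativity is replaced by the $2$-power hypothesis.
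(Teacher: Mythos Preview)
Your overall architecture matches the paper's: reduce to a simple automorphic $2$-loop, force $\Mlt(Q)$ to be of affine type, pass to a $GF(2)$-Lie algebra, and invoke Zel'manov--Kostrikin. The affine-type step is indeed commutativity-free (the paper does it via the Guralnick--Saxl classification of primitive groups of degree $2^n$, which is sharper than appealing to full O'Nan--Scott, but your sketch is morally right here).

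The genuine gap is your reduction to exponent~$2$. You write ``reducing, if necessary, to the case of exponent $2$, using that the subloop generated by squares is normal because inner mappings are automorphisms.'' In a \emph{simple} loop that subloop is either $\{1\}$ or all of $Q$, and nothing you have said rules out the latter; there is no Burnside basis theorem for automorphic $2$-loops telling you that squares generate a proper subloop. Without exponent~$2$ the bracket $[u,v]=u+v+u\circ v$ need not satisfy $[u,u]=0$, the operation need not be commutative, and the derivation of the Jacobi identity and the sandwich relations in the paper collapses. So this is not a cosmetic omission: the whole Lie-algebra machine is unavailable until you know $Q$ has exponent~$2$ (commutativity then follows from the antiautomorphic inverse property).

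The paper closes this gap with a separate argument (its Theorem~\thmref{reduce}) that has no analogue in your sketch: once $\Mlt(Q)$ is affine with socle $U$, the inversion map $J$ normalises $\Mlt(Q)$ and centralises $\Inn(Q)$, so $J$ normalises the characteristic subgroup $U$; then $U\langle J\rangle$ is a $2$-group, whence $C_U(J)\neq 0$, and irreducibility of $\Inn(Q)$ on $U$ forces $C_U(J)=U$, i.e.\ $J=\id_Q$. Thus $Q$ has exponent~$2$ and, by the antiautomorphic inverse property, is commutative. Only after this step does the Lie-algebra construction go through. Your proposal needs exactly this ingredient (or an honest substitute) in place of the squares-subloop idea.
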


Automorphic loops are power-associative, that is, each element generates a (cyclic) group \cite{BP}. In particular every element of an automorphic loop has a two-sided inverse. Unlike the situation for groups, in general power-associative loops, the property of being a $p$-loop is not equivalent to every element having order a power of $p$. This is, however, true for automorphic loops. For $p$ odd, this will be found in \cite{KKPV}. In the next section we will show that every finite automorphic loop consisting of elements of $2$-power order is a $2$-loop. We will address the (elementary) converse in the final section.

We also note that again unlike the group situation, automorphic $p$-loops are not necessarily nilpotent. Examples of nonnilpotent, commutative, automorphic $2$-loops can be found in \cite{JKV2}. Commutative, automorphic $p$-loops for $p > 2$ are indeed nilpotent, but there exist noncommutative, automorphic loops of order $p^3$ which are not \cite{JKV3}.

Just as for groups, if $Q$ is a loop with normal subloop $S$, then $Q$ is solvable if and only if both $S$ and $Q/S$ are solvable. Thus as one would expect, both Theorems \thmref{main} and \thmref{2loop} will follow from considering simple loops.

Theorem \thmref{main} itself reduces to considering finite simple commutative automorphic loops of exponent $2$ because of the following, which is a composite of Theorems 5.1 and 3.12 and Proposition 6.1 of \cite{JKV1}.
\begin{proposition}
\prplabel{composite}
Let $Q$ be a finite, commutative automorphic loop.
\begin{enumerate}
\item $Q \cong O(Q) \times E(Q)$ where $|O(Q)|$ is odd and $E(Q)$ is a $2$-loop.
\item If $|Q|$ is odd, then $Q$ is solvable.
\item If $Q$ is simple, then $Q$ has exponent $2$.
\end{enumerate}
\end{proposition}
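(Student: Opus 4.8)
The plan is to deduce the three parts from the structure theory of finite commutative automorphic loops worked out in \cite{JKV1}, treating the odd part and the $2$-part of $Q$ separately; the proposition is essentially a repackaging of results there, so I describe the ideas behind each piece and where the real work lies.

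For (1), the starting point is a Lagrange property together with a Sylow/Hall-type theorem for finite commutative automorphic loops. Writing $|Q| = 2^a m$ with $m$ odd, these produce a normal subloop $O(Q)$ of order $m$ and a normal subloop $E(Q)$ of order $2^a$; by a routine argument with element orders, $O(Q)$ is exactly the set of elements of odd order and $E(Q)$ the set of elements of $2$-power order, so both are invariant under $\Aut(Q)$ and in particular under $\Inn(Q)$. Since $O(Q) \cap E(Q) = 1$ and $|O(Q)|\,|E(Q)| = |Q|$, and since elements of $O(Q)$ commute and associate with those of $E(Q)$ (using commutativity of $Q$ and $\Inn(Q) \le \Aut(Q)$), the loop $Q$ is the internal direct product $O(Q) \times E(Q)$. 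The only genuinely nontrivial ingredient is the Sylow/Hall theory for these loops --- for instance, that the odd-order elements form a subloop, which does \emph{not} follow from power-associativity alone.

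For (2), iterate the splitting in (1) over the odd primes dividing $|Q|$: a commutative automorphic loop of odd order is the direct product of commutative automorphic $p$-loops with $p$ odd, so it is enough to show that each such factor is solvable. In fact \cite{JKV1} shows more, namely that a commutative automorphic $p$-loop with $p$ odd is centrally nilpotent --- it has a nontrivial center --- and solvability then follows by induction on the order. Establishing that the center is nontrivial for odd $p$ is the substantive input here; it rests on the commutator/associator calculus of commutative automorphic loops and on the unique $2$-divisibility available in odd order, and it genuinely needs $p$ odd, since nonnilpotent commutative automorphic $2$-loops exist (the examples of \cite{JKV2} noted in the introduction).

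For (3), assume $Q$ is simple. By (1) either $Q = O(Q)$ or $Q = E(Q)$. If $Q = O(Q)$ then $|Q|$ is odd, hence $Q$ is solvable by (2); but a simple solvable loop is an abelian group --- the penultimate term of a repetition-free subnormal series is a normal subloop of $Q$, hence trivial --- so $Q$ is cyclic of prime order, a possibility that is harmless for the application to Theorem~\thmref{main} (abelian groups being solvable) and that does not occur at all once $Q$ is assumed nonassociative. Otherwise $Q = E(Q)$ is a $2$-loop. Let $K \le Q$ be the subloop generated by all squares $x^2$, $x \in Q$; it is $\Inn(Q)$-invariant since $\Inn(Q) \le \Aut(Q)$, hence normal, so $K \in \{1, Q\}$. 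Here \cite{JKV1} is invoked: the quotient of $Q$ by the (normal) subloop generated by squares and associators is a nontrivial elementary abelian $2$-group, which forces $K \ne Q$; thus $K = 1$, i.e. $x^2 = 1$ for all $x \in Q$, and $Q$ has exponent $2$. The main obstacle throughout is the input from \cite{JKV1} behind (1) and (2) --- the Lagrange/Sylow machinery and, for odd $p$, the nontriviality of the center of a commutative automorphic $p$-loop; granted those, (3) is the short argument just given.
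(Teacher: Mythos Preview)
The paper does not prove this proposition; it simply records it as ``a composite of Theorems 5.1 and 3.12 and Proposition 6.1 of \cite{JKV1}.'' Your write-up is therefore an attempt to sketch the arguments behind those cited results, and it should be judged against what is actually in \cite{JKV1}.

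Your outlines of (1) and (3) are in the right spirit (and you correctly flag the trivial exception $Q\cong\mathbb{Z}/p$, $p$ odd, to the literal statement of (3)). However, your plan for (2) is off in a substantive way. You propose to ``iterate the splitting in (1) over the odd primes'' to write an odd-order commutative automorphic loop as a direct product of $p$-loops, and then to invoke central nilpotency of commutative automorphic $p$-loops for odd $p$. Two problems: first, the decomposition in (1) is specific to the prime $2$ --- it rests on the squaring map and the interplay between $2$-power and odd-order elements --- and there is no analogous direct-product splitting over the odd primes in \cite{JKV1}; such a splitting would amount to a nilpotency statement you have not established. Second, the nilpotency of commutative automorphic $p$-loops for odd $p$ is proved in \cite{JKV3}, not \cite{JKV1}, and that argument in fact \emph{uses} the odd-order solvability you are trying to prove, so your route is circular.

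The actual mechanism in \cite{JKV1} for (2), and indeed the engine behind much of (1), is the associated Bruck loop: to a commutative automorphic loop $Q$ one attaches a Bruck loop on the same underlying set, transfers structural questions to it, and then appeals to Glauberman's theorem that finite Bruck loops of odd order are solvable. Your sketch does not mention this construction at all, and without it (or some replacement) the odd-order solvability in (2) is not accessible by the line you describe.
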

\noindent In particular, suppose $Q$ is a minimal counterexample to Theorem \thmref{main}. If $Q$ were not simple, then $Q$ would have a normal subloop $S$ such that both $Q/S$ and $S$ are solvable. Since automorphic loops form a variety in the sense of universal algebra \cite{BP}, both $S$ and $Q/S$ are commutative and automorphic. But this contradicts the nonsolvability of $Q$. Therefore $Q$ is simple and, by Proposition \prpref{composite}, $Q$ is a $2$-loop of exponent $2$.

Similarly, suppose $Q$ is a minimal counterexample to Theorem \thmref{2loop}. Any subloop and any factor loop of a $2$-loop is a $2$-loop, and so by the same argument as in the preceding paragraph, it follows that $Q$ is simple. We will show in Theorem \thmref{reduce} that $Q$ must then be commutative and thus by Proposition \prpref{composite} have exponent $2$.

Thus both Theorems \thmref{main} and \thmref{2loop} will follow from showing that a finite simple commutative automorphic loop of exponent $2$ is a cyclic group of order $2$. This will be the main goal of the fourth section.

\section{Automorphic $2$-loops}

In this section, we prove that if every element of a finite automorphic loop has $2$-power order then the loop is a $2$-loop.

In a loop $Q$ with two-sided inverses, let $J: Q\to Q; x\mapsto x\inv$ denote the inversion map.

\begin{proposition}[\cite{JKNV}, Corollary 6.6]
\prplabel{aaip}
Every automorphic loop has the antiautomorphic inverse property, that is, the identity $(xy)\inv =y\inv x\inv$ holds for all $x,y$. Equivalently, $R_x^J = L_{x\inv}$ or $L_x^J = R_{x\inv}$ for all $x$.
\end{proposition}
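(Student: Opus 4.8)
The plan is to translate the assertion into an operator identity about translations and derive it from the automorphy of inner mappings. Since automorphic loops are power-associative \cite{BP}, every element has a two-sided inverse, so $J$ is a well-defined involution of $Q$. The basic mechanism is this: if $\varphi$ is an automorphism of a loop with two-sided inverses, then $\varphi(x\inv)\varphi(x)=\varphi(1)=1$, whence $\varphi(x\inv)=\varphi(x)\inv$, i.e. $\varphi J=J\varphi$; so in an automorphic loop $J$ commutes with every element of $\Inn(Q)$, indeed of $\Aut(Q)$. Writing $\varphi^J=J\varphi J$, one has $R_x^J(y)=(y\inv x)\inv$ and $L_{x\inv}(y)=x\inv y$, so $R_x^J=L_{x\inv}$ for all $x$ says precisely that $(y\inv x)\inv=x\inv y$, which after the substitution $y\mapsto y\inv$ is the antiautomorphic inverse property $(yx)\inv=x\inv y\inv$; conjugating the operator identity by $J$ shows it is equivalent to $L_x^J=R_{x\inv}$. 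Thus it suffices to establish the single identity $R_x^J=L_{x\inv}$.

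Next I would collect the routine consequences of $J$-centrality. Put $T_x=R_x\inv L_x$ and let $L_{x,y}=L_{yx}\inv L_yL_x$, $R_{x,y}=R_{xy}\inv R_yR_x$ be the standard maps fixing $1$; all of these lie in $\Inn(Q)\le\Aut(Q)$ and hence commute with $J$. One checks $T_x(x)=x$, and since $T_x$ is an automorphism this forces $T_xL_x=L_xT_x$ and $T_xR_x=R_xT_x$; together with $R_xT_x=L_x$ (the definition of $T_x$) this gives the flexible law and the factorizations $R_x=L_xT_x\inv=T_x\inv L_x$ and $L_x=R_xT_x=T_xR_x$. From the definition of $L_{x,y}$ one also gets the decomposition $L_{ab}=L_aL_bL_{b,a}\inv$, and similarly for right translations.

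The core of the proof is to pin down $R_x^J$: rewrite $R_x$ as a word in left translations and inner mappings using the factorizations and decompositions above, conjugate that word by $J$, push $J$ past every inner factor using $J$-centrality, and identify what remains as $L_{x\inv}$. The step I expect to be the real obstacle is that automorphic loops need not have the inverse property, so none of the cancellations $x\inv(xy)=y$, $(yx)x\inv=y$ is at one's disposal, and a naive manipulation of $R_x^J$ simply unwinds back to the definition of $R_x$, yielding only tautologies. Breaking this requires using the automorphy of an inner mapping on a genuine \emph{product}: one chooses $\varphi\in\Inn(Q)$ and elements $u,v$ with $uv$ equal to the inverse of a two-fold product while $u$ and $v$ are themselves inverses of single elements, so that $\varphi(uv)=\varphi(u)\varphi(v)$ together with $J\varphi=\varphi J$ produces a non-tautological relation among $J$-conjugates of translations. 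I would expect a short chain of such substitutions to yield $(xy\inv)(yx\inv)=1$; replacing $y$ by $y\inv$ gives $(xy)\inv=y\inv x\inv$, and the equivalences $R_x^J=L_{x\inv}\Leftrightarrow L_x^J=R_{x\inv}$ were recorded at the outset. Should the direct identity chase stall, the fallback is to invoke the structural description of $\Mlt(Q)$ and $\Inn(Q)$ for automorphic loops, from which the operator identity can be extracted more conceptually.
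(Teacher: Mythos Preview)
The paper does not prove this proposition at all; it is merely quoted as \cite{JKNV}, Corollary~6.6, and used as input. So there is no ``paper's own proof'' to compare against beyond the citation.

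As for your proposal itself: the preliminaries are sound. The observation that $J$ commutes with every element of $\Aut(Q)$, hence with $\Inn(Q)$, is correct and is exactly Corollary~\corref{normalize} of the present paper. Your derivation that $T_x(x)=x$ together with $T_x\in\Aut(Q)$ forces $T_x$ to commute with $L_x$ and $R_x$, and hence that automorphic loops are flexible, is also correct. The reformulation of the antiautomorphic inverse property as the operator identity $R_x^J=L_{x\inv}$ is accurate.

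However, the section you label ``the core of the proof'' is not a proof but a description of what a proof might look like. You write that you ``would expect a short chain of such substitutions to yield $(xy\inv)(yx\inv)=1$,'' but no such chain is exhibited, and you explicitly flag a fallback in case ``the direct identity chase stall[s].'' This is the genuine gap: the actual derivation of the antiautomorphic inverse property from $\Inn(Q)\le\Aut(Q)$ is not routine. The original proof in \cite{JKNV} was obtained with the aid of an automated theorem prover, and the human-readable version there passes through several auxiliary identities that are not obvious to guess. Your diagnosis of the obstacle (no inverse property, so naive manipulations unwind to tautologies) is exactly right, but identifying the obstacle is not the same as overcoming it. As written, the proposal supplies correct scaffolding around a missing keystone.
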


\begin{corollary}
\corlabel{normalize}
If $Q$ is an automorphic loop, then $J \in N_{\Sym(Q)}(\Mlt(Q))\cap C_{\Sym(Q)}(\Inn(Q))$.
\end{corollary}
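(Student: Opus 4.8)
The plan is to dispatch the two membership claims separately; both are short consequences of facts already established. First observe that $J$ is a bijection of $Q$ with $J^2 = \id$ (inverses in an automorphic loop are two-sided and unique), so $J \in \Sym(Q)$, $J\inv = J$, and $J(1) = 1$ since $1\inv = 1$.

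For $J \in N_{\Sym(Q)}(\Mlt(Q))$, I would use that $\Mlt(Q)$ is generated by the translations $R_x, L_x$ ($x \in Q$), so it suffices to check that conjugation by $J$ carries this generating set into $\Mlt(Q)$. Proposition \prpref{aaip} gives exactly $J R_x J = R_x^J = L_{x\inv}$ and, equivalently, $J L_x J = L_x^J = R_{x\inv}$ for every $x \in Q$. Hence conjugation by $J$ permutes $\{R_x, L_x : x \in Q\}$, and therefore $J\Mlt(Q)J = \Mlt(Q)$.

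For $J \in C_{\Sym(Q)}(\Inn(Q))$, I would note that since $Q$ is automorphic we have $\Inn(Q) \leq \Aut(Q)$, so it is enough to see that $J$ commutes with every automorphism of $Q$. But if $\varphi \in \Aut(Q)$, then $\varphi(x)\varphi(x\inv) = \varphi(x\inv)\varphi(x) = \varphi(1) = 1$, so by uniqueness of inverses $\varphi(x\inv) = \varphi(x)\inv$ for all $x$; this is precisely the identity $\varphi J = J\varphi$ of maps $Q \to Q$. Thus $J$ centralizes $\Aut(Q)$, and in particular $\Inn(Q)$.

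There is no genuine obstacle here: the content is carried entirely by Proposition \prpref{aaip} and the elementary observation that automorphisms preserve inverses, with the remaining steps being routine bookkeeping with the involution $J$. The statement is best viewed as a corollary recording these two facts in the form most convenient for later use.
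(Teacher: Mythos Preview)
Your proposal is correct and matches the paper's own proof almost exactly: the paper likewise deduces normalization from the fact that $\Mlt(Q)$ is generated by the $R_x$, $L_x$ together with Proposition~\prpref{aaip}, and deduces centralization from $\Inn(Q)\le\Aut(Q)$ and the fact that the antiautomorphism $J$ commutes with every automorphism. Your write-up simply spells out the details a bit more fully.
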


\begin{proof}
That $J$ normalizes $\Mlt(Q)$ follows because $\Mlt(Q)$ is generated by all $R_x$, $L_x$. That $J$ centralizes $\Inn(Q)$ follows because $\Inn(Q)\leq \Aut(Q)$, which is centralized by the antiautomorphism $J$.
\end{proof}

The following result was shown for commutative automorphic loops in \cite{JKV1}.

\begin{lemma} \label{lm:PaPbPa}
Let $Q$ be an automorphic loop. Define the map $P_x=R_x\inv L_{x\inv}=
R_x\inv R_{x}^J$. Then, for all $a,b \in Q$, we have $P_a P_b P_a = P_c$ with $c = b L_{a\inv}\inv R_a$. Moreover, $P_a^n =P_{a^n}$ holds for all integers $n$.
\end{lemma}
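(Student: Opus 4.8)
The plan is to prove the identity $P_aP_bP_a=P_c$ first and then read off the ``moreover'' clause. I write maps on the right. The hypothesis $\Inn(Q)\leq\Aut(Q)$ supplies four tools. (i) For $\phi\in\Aut(Q)$ one has $\phi\inv R_x\phi=R_{x\phi}$ and $\phi\inv L_x\phi=L_{x\phi}$; since an automorphism preserves inverses it commutes with $J$, so also $\phi\inv P_x\phi=P_{x\phi}$. (ii) Each of $\mu_a:=R_aL_a\inv$ and $R_a\inv L_a$ fixes $1$ and lies in $\Mlt(Q)$, hence is an inner mapping and therefore an automorphism; a direct check gives $(a^n)\mu_a=a^n$ for every integer $n$, so $\mu_a$ and $\mu_a\inv$ commute with $R_{a^n}$, $L_{a^n}$ and $P_{a^n}$. (iii) The map $\phi_a:=P_aR_{a^2}$ satisfies $1\phi_a=a^{-2}\cdot a^2=1$ by power-associativity in $\langle a\rangle$, so $\phi_a\in\Inn(Q)\leq\Aut(Q)$; moreover $(a^2)\phi_a=a^2$, so $\phi_a$ commutes with $R_{a^2}$, and hence so does $P_a=\phi_aR_{a^2}\inv$. (iv) By Proposition~\prpref{aaip} and $J^2=\id$, $P_x^J=(R_x^J)\inv R_x=L_{x\inv}\inv R_x=P_x\inv$.

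Since $P_a\inv=L_{a\inv}\inv R_a$, the element $c=bL_{a\inv}\inv R_a$ of the statement equals $bP_a\inv$, so the claim is that $\{P_x\mid x\in Q\}$ is closed under $(f,g)\mapsto fgf$, with $P_aP_bP_a=P_{bP_a\inv}$. Writing $P_a=\phi_aR_{a^2}\inv$ and using (iii) and (i),
\[
P_aP_bP_a=(\phi_aR_{a^2}\inv)\,P_b\,(\phi_aR_{a^2}\inv)=\phi_a^2\,R_{a^2}\inv\,P_{b\phi_a}\,R_{a^2}\inv=P_a^2\,R_{a^2}\,P_{b\phi_a}\,R_{a^2}\inv,
\]
where the second equality uses $P_b\phi_a=\phi_aP_{b\phi_a}$ together with the commuting of $R_{a^2}\inv$ with $\phi_a$, and the third uses $\phi_a^2=(P_aR_{a^2})^2=P_a^2R_{a^2}^2$. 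What remains is to evaluate the permutation $P_a^2R_{a^2}P_{b\phi_a}R_{a^2}\inv$ on an arbitrary element and identify it with $P_c$ — equivalently, to determine exactly how $P_a$ conjugates a single translation. I expect this to be the main obstacle: it is the step where $\Inn(Q)\leq\Aut(Q)$ is used in an essential way, beyond the antiautomorphic inverse property and power-associativity, and here I would follow the argument given for the commutative case in \cite{JKV1}, carrying out the manipulation with (i)--(iii) and then using Proposition~\prpref{aaip} (via $c\inv=b\inv P_a$) to recognize the result as $P_c$.

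Granting the identity, the ``moreover'' clause is short. Taking $b=1$ already gives $P_a^2=P_{1\cdot P_a\inv}=P_{a^2}$, because $(1)P_a\inv=(1)L_{a\inv}\inv R_a=a\cdot a=a^2$. For $b=a^k$, power-associativity in $\langle a\rangle$ gives $(a^k)P_a\inv=(a^k)L_{a\inv}\inv R_a=a^{k+1}\cdot a=a^{k+2}$, so $P_aP_{a^k}P_a=P_{a^{k+2}}$; with $P_a^0=\id=P_1$ and $P_a^1=P_a$ this proves $P_a^n=P_{a^n}$ for all $n\geq 0$ by induction in steps of two. For negative exponents it suffices to show $P_{a\inv}=P_a\inv$; clearing translations reduces this to the identity $L_{a\inv}R_{a\inv}\inv=R_aL_a\inv$, that is, to $\mu_{a\inv}=\mu_a\inv$. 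Now $\mu_{a\inv}^{\,J}=(R_{a\inv})^J\bigl((L_{a\inv})^J\bigr)\inv=L_aR_a\inv=\mu_a\inv$ by Proposition~\prpref{aaip}, while $\mu_{a\inv}\in\Inn(Q)$ and $J$ centralizes $\Inn(Q)$ by Corollary~\corref{normalize}, so $\mu_{a\inv}^{\,J}=\mu_{a\inv}$; hence $\mu_{a\inv}=\mu_a\inv$, and $P_a^{-n}=(P_a\inv)^n=(P_{a\inv})^n=P_{a^{-n}}$ for $n\geq 1$.
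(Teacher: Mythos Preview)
Your proof of the main identity $P_aP_bP_a=P_c$ is incomplete: you explicitly stop at $P_a^{2}\,R_{a^2}\,P_{b\phi_a}\,R_{a^2}^{-1}$, call the remaining step ``the main obstacle'', and defer to an unspecified argument from \cite{JKV1}. That is the heart of the lemma, and the decomposition $P_a=\phi_aR_{a^2}^{-1}$ does not obviously close the gap --- you would still need to understand how conjugation by $R_{a^2}$ interacts with $P_{b\phi_a}$, and there is no reason this should be easier than the original problem. Note also that your derivation of $P_a^2=P_{a^2}$ in the ``moreover'' paragraph presupposes the main identity, so you cannot feed it back into the displayed computation without circularity.

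The paper's argument bypasses this entirely with a single structural observation: since $P_x=R_x^{-1}R_x^J$, one has
\[
P_aP_bP_a = R_a^{-1}R_a^J\,R_b^{-1}R_b^J\,R_a^{-1}R_a^J = g^{-1}g^J,\qquad g=R_b\,(R_a^{-1})^J\,R_a.
\]
Now write $g=hR_c$ with $c=1g=bL_{a^{-1}}^{-1}R_a$ and $h\in\Inn(Q)$; because $h$ is an automorphism it commutes with $J$ (Corollary~\corref{normalize}), so $g^{-1}g^J=R_c^{-1}h^{-1}h^JR_c^J=R_c^{-1}R_c^J=P_c$. This is a two-line computation once you spot the $g^{-1}g^J$ pattern, and it is exactly the missing idea in your attempt.

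Your treatment of the ``moreover'' clause is correct but more laborious than needed: once $P_aP_bP_a=P_{bP_a^{-1}}$ is available, taking $b=a^{-1}$ gives $c=a^{-1}P_a^{-1}=a$ and hence $P_{a^{-1}}=P_a^{-1}$ directly, with no detour through $\mu_{a^{-1}}=\mu_a^{-1}$.
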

\begin{proof}
We have
\[
P_a P_b P_a = R_a\inv R_a^J R_b\inv R_b^J R_a\inv R_a^J
= g^{-1} g^J,
\]
where $g = R_b (R_a\inv)^J R_a$. Let $c = 1g = b R_a^J R_a = b L_{a\inv}\inv R_a$ and set $h = g R_c\inv$. Observe that $h\in \Inn(Q)$, so that $h\in \Aut(Q)$.
Thus
\[
P_a P_b P_a = R_c\inv h\inv J h R_c J = R_c\inv J R_c J =P_c\,,
\]
since $h$ centralizes $J$. This proves the first statement of the lemma. By
putting $a = b^{-1}$, we have $c = a$ and $P_a P_{a\inv} P_a = P_a$, which implies
$P_{a\inv}=P_a\inv$. Similarly, $b = 1$ implies $P_{a^2} = P_a^2$. Continuing this, one
obtains $P_{a^n} = P_a^n$ for all integers $n$.
\end{proof}

\begin{theorem}
\thmlabel{2power}
Let $Q$ be a finite automorphic loop. If every element of $Q$ has $2$-power
order then $Q$ is a $2$-loop.
\end{theorem}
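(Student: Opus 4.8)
The plan is to reduce the statement to a counting argument inside the multiplication group, using the map $P_x$ introduced in Lemma~\ref{lm:PaPbPa}. First I would observe that by Proposition~\prpref{aaip}, $J = P_1^{-1}\cdot(\text{trivial})$, and more usefully that $P_a = R_a^{-1}R_a^J$, so $P_a^J = (R_a^J)^{-1}R_a = P_a^{-1}$; thus $J$ inverts every $P_a$. Since automorphic loops are power-associative and (by hypothesis) every element has $2$-power order, Lemma~\ref{lm:PaPbPa} gives $P_a^{2^k} = P_{a^{2^k}} = P_1 = \id$ for $k$ large, so each $P_a$ is a $2$-element of $\Sym(Q)$. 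I would then set $H = \langle P_a \mid a\in Q\rangle$ and use the conjugation formula $P_aP_bP_a = P_c$ from Lemma~\ref{lm:PaPbPa} to see that $\{P_a \mid a\in Q\}$ is closed under conjugation by its own elements; combined with $P_{a^{-1}} = P_a^{-1}$, this means the set $T = \{P_a \mid a \in Q\}$ is a \emph{twisted subgroup} (or symmetric subset) of $\Sym(Q)$ generating $H$, and the map $a\mapsto P_a$ is a bijection $Q\to T$ (injectivity because $P_a$ determines $1P_a^{-1}\cdot(\cdots)$; in fact $aR_a = 1P_a^{-1}$ recovers $a$, or more simply $1\cdot a = a$ can be read off since $1P_a = 1R_a^{-1}L_{a^{-1}} = \dots$). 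So $|H|$ is divisible only by $2$ on the "diagonal" part and $|T| = |Q|$.

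The heart of the matter is to promote "every $P_a$ is a $2$-element" to "$H$ is a $2$-group" and then to "$|Q|$ is a power of $2$." For the first, I would invoke the standard fact (going back to Baer, or via the theory of twisted subgroups / Glauberman's work on loops) that a subgroup of a finite group generated by a conjugation-closed set of $2$-elements which is itself a twisted subgroup is a $2$-group; alternatively one shows directly that $H$ acts on $Q$ and that $T$ forms a single "sharply transitive-like" set. Concretely, $1P_a = 1R_a^{-1}R_a^J = (1R_a^{-1})R_a^J$, and since $1R_a^{-1} = a^{-1}\cdot(\dots)$ — one computes $1P_a = a^{-1}\backslash\!\backslash(\dots)$; the upshot I want is that $H$ acts \emph{transitively} on $Q$ with the stabilizer of $1$ meeting $T$ only in $\{\id\}$, so that $|Q| \bigm| |H|$ and $|H|$ is a $2$-power (the latter from $H$ being generated by the $2$-element conjugacy-closed set $T$, via Baer's theorem that such an $H$ is nilpotent, hence a $2$-group since all its generators are $2$-elements).

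Having $|Q|$ a power of $2$ is exactly the assertion that $Q$ is a $2$-loop. I would therefore structure the proof as: (i) $T=\{P_a\}$ is a twisted subgroup of $\Sym(Q)$ consisting of $2$-elements, of size $|Q|$; (ii) $H=\langle T\rangle$ is a $2$-group by Baer's theorem on conjugation-closed sets of $2$-elements; (iii) the orbit of $1$ under $H$ is all of $Q$ — this follows because $1P_a$ runs over enough of $Q$, or more cleanly because $\Mlt(Q)$ is transitive and $H$ turns out to be related to a sharply transitive subset — forcing $|Q| \mid |H|$; hence $|Q|$ is a $2$-power.

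I expect the main obstacle to be step~(iii): showing $H$ is transitive on $Q$, equivalently that the twisted subgroup $T$ is large enough. The formula $1P_a = 1R_a^{-1}R_a^J$ does not obviously sweep out all of $Q$, so one likely needs an auxiliary identity — perhaps that $Q$ with the operation $a\circ b = $ (the element whose $P$ is $P_a P_b P_a$, suitably normalized) or the observation that $P_a$ acts on $Q$ as a "displacement" whose orbit structure is controlled by the loop — to pin down $|Q| \mid |H|$. A clean alternative, which I would try first, is to avoid transitivity of $H$ entirely: show instead that $|Q|$ divides $|\Mlt(Q)|$ (immediate, since $\Mlt(Q)$ is transitive on $Q$) and that every \emph{odd-order} element of $\Mlt(Q)$ is trivial — this last via a Sylow/Glauberman-type argument using that $J$ normalizes $\Mlt(Q)$ and inverts each $P_a$, together with the fact (to be extracted from power-associativity and the exponent hypothesis) that $\Mlt(Q)$ is generated by $2$-elements $R_a$; then $\Mlt(Q)$ is a $2$-group and $|Q|$ is a $2$-power. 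Verifying that the $R_a$ (not just the $P_a$) are $2$-elements, or else carrying the argument purely through the $P_a$, is where the real work lies.
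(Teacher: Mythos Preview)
Your outline has the right ingredients --- the maps $P_a$, their $2$-power order, and a Baer-type argument --- but step~(ii) is false as stated and step~(iii) collapses in a case you cannot avoid.

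For step~(ii): it is \emph{not} true that a twisted subgroup consisting of $2$-elements generates a nilpotent (or $2$-) group. In $S_3$ the set $T=\{e,(12),(13),(23)\}$ is a twisted subgroup of involutions, yet $\langle T\rangle=S_3$. What Baer--Suzuki actually says is that a conjugacy class $C$ of $p$-elements lies in $O_p(G)$ provided any \emph{two} elements of $C$ generate a $p$-group; for a class of involutions this reduces to checking that every product of two of them has $p$-power order. Your set $\{P_a\}$ is neither a conjugacy class nor a set of involutions, so the hypothesis is not available. The paper's fix is to work instead with the conjugacy class $C$ of the involution $J$ inside $G=\Mlt(Q)\langle J\rangle$: since $J$ centralises $\Inn(Q)$ one has $C=\{J^{R_x}:x\in Q\}=\{P_xJ:x\in Q\}$, and a short computation gives $J^bJ^a=P_x^{\,a}$ for suitable $x$. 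Now Lemma~\ref{lm:PaPbPa} does show this product has $2$-power order, and Baer--Suzuki legitimately yields a normal $2$-subgroup $H\trianglelefteq G$.

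For step~(iii): transitivity of $\langle P_a\rangle$ on $Q$ fails outright when $Q$ is commutative of exponent~$2$, since then $a^{-1}=a$ and $L_a=R_a$ force $P_a=R_a^{-1}L_{a^{-1}}=\id$ for every $a$; the group you build is trivial. Your fallback, that $\Mlt(Q)$ itself is a $2$-group (equivalently that each $R_a$ is a $2$-element), is also false: there exist non-nilpotent automorphic $2$-loops, and for these $\Mlt(Q)$ cannot be a $2$-group. The paper avoids transitivity entirely by first passing to a \emph{simple} minimal counterexample. Then $H\cap\Mlt(Q)$ is a nontrivial normal $2$-subgroup of $\Mlt(Q)$, and its orbit through $1$ is a proper normal subloop of $2$-power order --- a contradiction --- unless $|H|=2$, i.e.\ $J$ is central in $G$. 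That residual case is precisely the commutative exponent-$2$ situation, which is dispatched by an earlier result of Jedli\v{c}ka, Kinyon and Vojt\v{e}chovsk\'{y}.
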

\begin{proof}
Assume $Q$ is a minimal counterexample. As usual, $Q$ is simple and $\Inn(Q)$
is maximal in $\Mlt(Q)$. Let $G = \Mlt(Q)\langle J\rangle$, let $C$ denote the conjugacy class of $J$ in $G$ and let $X = \{ P_x \mid x\in Q\} \subset G$. As $J$ centralizes $\Inn(Q)$, we have $C = \{ J^{R_x} \mid x\in Q\}$. Moreover,
\[
J^{R_x} = R_x^{-1} J R_x = P_x J\,,
\]
thus, the map $g\mapsto gJ$ is a bijection between $C$ and $X$. Take any two
elements $J^a,J^b \in C$. As $b = h R_x a$ for some $h\in \Inn(Q)$ and $x\in Q$,
we have $J^b = J^{R_x a}$ and $J^b J^a = (J^{R_x} J)^a = P_x^a$. By
Lemma \ref{lm:PaPbPa}, the order of the permutation $P_x$ divides the order of
$x$, thus, the order of $J^b J^a$ is a power of $2$ for all $a,b \in G$ by
assumption. The Baer-Suzuki theorem (\cite{KS}, Thm. 6.7.6) implies that $C$
generates a nilpotent subgroup $H$ of $G$. As the Sylow $2$-group of $H$ is
normal in $H$, $H$ must be a $2$-group itself which is normal in $G$.

If $|H|>2$ then $H\cap \Mlt(Q)$ is a normal $2$-group of $\Mlt(Q)$, whose orbit
determines a nontrivial normal subloop of $2$-power order, a contradiction. If
$|H|=2$ then $H=\langle J \rangle$ and $J$ is central in $G$. This implies that $Q$
must be commutative of exponent $2$. By (\cite{JKV1}, Corollary 6.3), $Q$ has $2$-power
order, a contradiction.
\end{proof}

\section{The multiplication group of simple automorphic $2$-loops}

The starting point for our study of simple loops is the following important result, which is an immediate consequence of the characterization of normal subloops as blocks of the multiplication group (\cite{Albert}, Theorem 8).

\begin{proposition}
\prplabel{primitive}
A loop $Q$ is simple if and only if $\Mlt(Q)$ acts primitively on $Q$.
\end{proposition}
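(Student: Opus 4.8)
The plan is to read the statement off from the standard correspondence between blocks of a transitive permutation group and subgroups lying above a point stabilizer, combined with Albert's identification of normal subloops with blocks through the identity. First I would note that $\Mlt(Q)$ acts transitively on $Q$, since $1L_a = a$ for every $a\in Q$; hence primitivity of $\Mlt(Q)$ is meaningful in the usual sense, and the stabilizer of $1$ is precisely $\Inn(Q)$ by definition.

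Next I would invoke the elementary fact from permutation group theory that, for a group $G$ acting transitively on a set $\Omega$ and a point $\omega\in\Omega$, the assignment $H\mapsto \omega H$ is an inclusion-preserving bijection between the subgroups $H$ with $G_\omega \le H \le G$ and the blocks of $G$ containing $\omega$, and that every block system of $G$ contains exactly one block through $\omega$. In particular, $G$ fails to be primitive if and only if there is a block $B$ with $\omega\in B$ and $\{\omega\}\subsetneq B\subsetneq\Omega$.

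Finally I would apply Albert's theorem (recalled in the introduction): a subset $S\subseteq Q$ is a normal subloop of $Q$ if and only if $S$ is a block of $\Mlt(Q)$ containing $1$, equivalently $S = 1H$ for some subgroup $H$ with $\Inn(Q)\le H\le \Mlt(Q)$. Combining the two facts, $Q$ possesses a normal subloop strictly between $\{1\}$ and $Q$ exactly when $\Mlt(Q)$ possesses a nontrivial block through $1$, i.e.\ exactly when $\Mlt(Q)$ is imprimitive; taking contrapositives yields the proposition. The only non-routine ingredient is the normal-subloop/block correspondence, which is Albert's Theorem~8 and which I would simply cite; the permutation-theoretic half is completely standard, and the trivial blocks $\{1\}$ and $Q$ match the trivial normal subloops, so no boundary cases cause trouble.
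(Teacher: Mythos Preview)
Your argument is correct and is exactly the approach the paper takes: the paper simply records the proposition as an immediate consequence of Albert's characterization of normal subloops as blocks of $\Mlt(Q)$ containing $1$ (together with the standard block/overgroup correspondence for transitive actions), which is precisely what you spell out. There is nothing to add or correct.
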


The simple loops under consideration here are all $2$-loops, and so we will use the classification of primitive groups of degree a power of $2$. This follows from the classification of nonabelian simple groups of prime power degree by Guralnick \cite{Guralnick} and is stated explicitly in \cite{GuSa}. For $p=2$, the result can be refined slightly using Zsigmondy's theorem \cite{Zsigm} as given in (3.3) of \cite{Guralnick}.

Recall that a primitive permutation group $G$ is of \emph{affine type} if it has an abelian
regular normal subgroup, which is necessarily elementary abelian of order $p^n$ for some
prime $p$. In this case $G$ is embedded in the affine group $AGL(n,p)$ with the socle being
the translation subgroup. The stabiliser of $0\in GF(p)^n$ is a subgroup of $GL(n,p)$ which
acts irreducibly on $GF(p)^n$.

\begin{proposition}[Guralnick and Saxl \cite{GuSa}]
\prplabel{GS}
Let $G$ be a primitive permutation group of degree $2^n$. Then either $G$ is of affine
type, or $G$ has a unique minimal normal subgroup $N=S\times \cdots \times S = S^t$, $t\ge
1$, $S$ is a nonabelian simple group, and one of the following holds:
\begin{enumerate}
\item[(i)] $S=A_m$, $m=2^e\geq 8$, $n=te$, and the point stabilizer in $N$ is $N_1 =
A_{m-1}\times \cdots \times A_{m-1}$, or
\item[(ii)] $S=PSL(2,q)$, $q = 2^e -1\geq 7$ is a Mersenne prime, $n=te$,
and the point stabilizer in $N$ is the direct product of maximal parabolic subgroups
each stabilizing a $1$-space.
\end{enumerate}
\end{proposition}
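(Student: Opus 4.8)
The plan is to derive Proposition~\prpref{GS} from Guralnick's classification of finite simple groups possessing a subgroup of prime power index \cite{Guralnick}, together with the O'Nan--Scott description of primitive permutation groups, and to finish with a numerical reduction using Zsigmondy's theorem \cite{Zsigm}.

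First I would set up the O'Nan--Scott dichotomy. A minimal normal subgroup of a primitive group is transitive; so if $G$ has degree $2^n$ and has an abelian minimal normal subgroup $N$, then $N$ is elementary abelian and regular, $G$ is of affine type, and $N$ is the unique minimal normal subgroup. Otherwise every minimal normal subgroup has the form $N=S\times\cdots\times S=S^t$ with $S$ nonabelian simple. Using only that $|S|$ is not a prime power (a nonabelian simple group is not a $p$-group), one checks that in every O'Nan--Scott type other than almost simple and product action over an almost simple base the degree is divisible by $|S|$; hence $N$ is unique and $G$ is of one of those two types. In both, primitivity forces $G$ to permute the $t$ factors transitively and the point stabilizer in $N$ to have the form $N_1=R\times\cdots\times R$ for a single subgroup $R\le S$ with $[S:R]=2^e$ and $te=n$ (where $t=1$ in the almost simple case). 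Thus $S$ is a nonabelian simple group with a subgroup of $2$-power index, and $N_1$ is the $t$-fold direct power of such a subgroup.

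Next I would invoke Guralnick's theorem: up to the familiar isomorphisms among small groups, the pairs $(S,R)$ with $[S:R]=p^a>1$ are $(A_m,A_{m-1})$ with $m=p^a$; $(PSL(d,q),\text{a maximal parabolic stabilizing a point or a hyperplane})$ with $(q^d-1)/(q-1)=p^a$; $(PSL(2,11),A_5)$ with index $11$; $(M_{11},M_{10})$ and $(M_{23},M_{22})$ with indices $11$ and $23$; and $(PSp(4,3),\text{parabolic})$ with index $27$. Taking $p=2$, the sporadic families and $PSL(2,11)$ are eliminated at once, since $11$, $23$ and $27$ are not powers of $2$. The alternating family yields case~(i); here $m=2^e$ together with simplicity of $A_m$ forces $m\ge 8$, i.e.\ $e\ge 3$. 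In the projective family the index condition reads $(q^d-1)/(q-1)=1+q+\cdots+q^{d-1}=2^e$. By Zsigmondy's theorem, for every $d\ge 3$ the number $q^d-1$ has a primitive prime divisor unless $(d,q)=(6,2)$, in which case the index equals $63$, not a $2$-power; and a primitive prime divisor of $q^d-1$ is congruent to $1$ modulo $d$, hence odd, and divides $(q^d-1)/(q-1)$, contradicting that this quotient is a power of $2$. Therefore $d=2$ and $q+1=2^e$. Since $q$ must be a prime power and, by an elementary factorization argument, $2^e-1$ is a prime power only when it is prime, $q=2^e-1$ is a Mersenne prime, necessarily $\ge 7$ because $PSL(2,q)$ is simple only for $q\ge 4$; this is case~(ii). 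Transporting the point stabilizers $N_1=R^t$ back through the product action gives the stated descriptions.

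The single deep ingredient is Guralnick's classification, which ultimately rests on the classification of finite simple groups and which I would treat as a black box. Within the present argument the two substantive steps are the O'Nan--Scott bookkeeping that isolates the condition ``$S$ has a subgroup of $2$-power index'' and the Zsigmondy arithmetic that collapses the projective case to $PSL(2,q)$ with $q$ a Mersenne prime. I anticipate no genuine obstacle beyond carefully matching the exceptional isomorphisms among small simple groups (for instance $PSL(4,2)\cong A_8$ and $PSL(3,2)\cong PSL(2,7)$), which affect only how a particular group is labelled and not the final list.
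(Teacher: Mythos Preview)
The paper does not supply its own proof of this proposition; it is quoted from \cite{GuSa} with the remark that it follows from Guralnick's classification \cite{Guralnick} refined for $p=2$ via Zsigmondy's theorem \cite{Zsigm}. Your sketch carries out precisely that derivation---O'Nan--Scott reduction to a simple group $S$ with a subgroup of $2$-power index, Guralnick's list, and the Zsigmondy arithmetic forcing $d=2$ and $q$ a Mersenne prime---and is correct.
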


We will use the following result of Dr\'{a}pal (\cite{Drapal}, Theorem 5.1).

\begin{proposition}
\prplabel{Drapal}
Let $F$ be a finite field, $|F| \neq 3,4$, and let $Q$ be a loop with $\Mlt(Q) \leq P{\Gamma}L(2,F)$. Then $\Mlt(Q)\cong Q$ is a cyclic group.
\end{proposition}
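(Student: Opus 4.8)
The plan is to determine the permutation group $G:=\Mlt(Q)$ completely. Write $q=|F|$ and let $\Omega=PG(1,F)$, so $|\Omega|=q+1$; as $G$ is transitive on $Q$ and sits inside the transitive group $P\Gamma L(2,q)$ on $\Omega$, we may identify $Q$ with $\Omega$, so that $|Q|=q+1$ and $H:=\Inn(Q)$ is the stabilizer in $G$ of a point $1\in\Omega$. I would use the standard structural facts about loop multiplication groups: $H$ is core-free in $G$; the sets $A=\setof{L_x}{x\in Q}$ and $B=\setof{R_x}{x\in Q}$ of left and right translations are left transversals of $H$ in $G$ which are sharply transitive sets of permutations of $Q$, each containing $\id$; $[A,B]\subseteq H$; and $G=\langle A\cup B\rangle$. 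The goal is to prove that $G$ is cyclic: for then $H$ is a subgroup of the abelian group $G$, hence normal, hence trivial since it is core-free, so $\Inn(Q)=1$, which forces $Q$ to be an abelian group with $Q\cong\Mlt(Q)=G$.

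To pin down $G$ I would exploit two consequences of containment in $P\Gamma L(2,q)$. For $x\neq 1$ the translations $L_x$ and $R_x$ are fixed-point-free on $\Omega$, and a fixed-point-free element of $PGL(2,q)$ generates a subgroup of a nonsplit torus---a cyclic Singer subgroup of order $q+1$ acting regularly on $\Omega$---so $A\setminus\{\id\}$ and $B\setminus\{\id\}$ consist of $q$ fixed-point-free elements apiece. Second, since $PGL(2,q)$ is sharply $3$-transitive on $\Omega$, any element of $G$ with three or more fixed points on $\Omega$ lies in a conjugate of the group of field automorphisms, which tightly constrains the point stabilizer $H$. Feeding these facts into Dickson's classification of the subgroups of $P\Gamma L(2,q)$, the transitive group $G$ is, up to conjugacy, one of: (a)~a subgroup of the normalizer of a nonsplit torus (so cyclic, dihedral, or a metacyclic extension of a cyclic group by field automorphisms); (b)~a subgroup of the normalizer of a split torus or of a unipotent radical, hence fixing one or two points of $\Omega$; (c)~a group $G$ with $PSL(2,q_0)\leq G\leq P\Gamma L(2,q)$ for a subfield $F_0\leq F$; (d)~one of $A_4$, $S_4$, $A_5$, possibly extended by field automorphisms; or (e)~a non-cyclic abelian group acting regularly.

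Now one rules out every option except ``$G$ cyclic.'' Case (b) is incompatible with transitivity (after noting that a transitive group normalizing a unipotent radical falls into case (c) or (d)). In case (a), a routine analysis of the available fixed-point-free elements together with the relation $[A,B]\subseteq H$ shows that $A$ and $B$ generate only a cyclic subgroup, so (a) contributes nothing beyond cyclic $G$. In case (d), $A_4$, $S_4$, $A_5$ can be transitive on $q+1$ points only for a few small $q$, and inspection shows that the only genuine non-cyclic loop multiplication groups that arise occur at $q=4$, where $A_5=PSL(2,4)$ and $S_5=P\Gamma L(2,4)$ are multiplication groups of nonassociative loops of order $5$; in case (e) a non-cyclic abelian transitive subgroup of $P\Gamma L(2,q)$ on $q+1$ points exists only at $q=3$, namely the regular Klein four-group, the multiplication group of the group $\mathbb{Z}_2\times\mathbb{Z}_2$. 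These are precisely the configurations that the hypothesis $|F|\neq 3,4$ excludes. Case (c) is the heart of the matter: transitivity of $G$ on $\Omega$ forces $q_0=q$, so that $PSL(2,q)\normal G$ and $H$ meets $PSL(2,q)$ in a Borel subgroup; one must show that no sharply transitive sets $A,B$ with $[A,B]\subseteq H$ and $\langle A\cup B\rangle=G$ can exist, i.e.\ that $PSL(2,q)\leq G\leq P\Gamma L(2,q)$ is never a loop multiplication group on the projective line for $q\neq 3,4$. This is the step I expect to be the main obstacle; I would attack it by exploiting that the $q$ nontrivial left translations are fixed-point-free while left translations commute with right translations modulo the parabolic $H$, and play this off against the $2$-transitivity of $PGL(2,q)$ on $\Omega$ to force $\langle A\cup B\rangle$, hence $G$, too small to contain the simple group $PSL(2,q)$.

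With every case disposed of, $G$ is cyclic, and therefore, by the observation in the first paragraph, $\Mlt(Q)\cong Q$ is a cyclic group.
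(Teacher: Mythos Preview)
The paper does not prove this proposition at all; it is quoted as Theorem~5.1 of Dr\'apal's paper \cite{Drapal} and used as a black box. So there is no ``paper's own proof'' to compare against beyond the citation.

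Your outline is in fact the same framework Dr\'apal uses: identify $Q$ with the projective line, exploit that the left and right translation sets are $H$-connected transversals (i.e.\ $[A,B]\subseteq H$ with $H$ core-free), and run through the Dickson subgroup list for $P\Gamma L(2,q)$. The peripheral cases~(a), (b), (d), (e) are handled more or less as you indicate, though your treatment of~(a) and~(e) is sketchy (for instance, the normalizer of a nonsplit torus in $P\Gamma L(2,q)$ can involve field automorphisms nontrivially, and ``routine analysis'' hides real work).

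The genuine gap is case~(c), and you say so yourself. Showing that no $G$ with $PSL(2,q)\le G\le P\Gamma L(2,q)$, $q\neq 3,4$, admits a pair of $H$-connected transversals generating $G$ is precisely the substantive content of Dr\'apal's theorem, and it is not short. Your proposed line of attack---that the fixed-point-free translations together with $[A,B]\subseteq H$ should force $\langle A\cup B\rangle$ to be too small---is the right intuition, but turning it into a proof requires a careful count of how many fixed-point-free elements lie in each coset of a Borel subgroup and an analysis of how the commutator condition interacts with the Bruhat decomposition; Dr\'apal's argument occupies several pages. As written, your proposal reduces the proposition to exactly the hard statement it is meant to establish, so it is not yet a proof.
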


We record one elementary fact about primitive groups.

\begin{lemma}
\lemlabel{primitive}
Let $G$ be a permutation group acting primitively on a set $\Omega$. Then for any $x\in \Omega$, $G_x$ acts fixed point free on $\Omega\setminus \{x\}$.
\end{lemma}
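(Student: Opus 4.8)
The plan is to deduce the statement from the standard characterization of primitivity — the group-theoretic fact behind Proposition~\prpref{primitive} — namely that a transitive permutation group $G$ on $\Omega$ is primitive if and only if the point stabilizers $G_x$ are maximal subgroups of $G$. I would take this as known and argue by contradiction.

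Suppose that some $y\in\Omega$ with $y\neq x$ is fixed by all of $G_x$, i.e.\ $G_x\leq G_y$. Since $G_x$ is maximal, either $G_y=G_x$ or $G_y=G$; the latter is impossible because $G$ is transitive with $|\Omega|\geq 2$ and so fixes no point. Hence $G_y=G_x$. Picking $g\in G$ with $xg=y$ (transitivity), we get $g\inv G_x g = G_{xg}=G_y=G_x$, so $g\in N_G(G_x)$, while $g\notin G_x$ because $xg=y\neq x$. Thus $G_x$ is a proper subgroup of $\langle G_x,g\rangle\leq N_G(G_x)$, and maximality of $G_x$ forces $N_G(G_x)=G$, that is, $G_x\normal G$.

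Now I would use the elementary fact that a normal subgroup of a transitive permutation group which is contained in a point stabilizer is trivial: from $G_x\normal G$ we get $G_x=g\inv G_x g=G_{xg}$ for every $g\in G$, and since $xg$ ranges over all of $\Omega$ this says $G_x$ fixes every point; as $G\leq\Sym(\Omega)$ acts faithfully, $G_x=1$. Then $G$ acts regularly, $|G|=|\Omega|$, and — a regular group being primitive only in prime degree — we would have $G\cong\mathbb{Z}/p$ with $|\Omega|=p$. In the situations in which this lemma is applied, $G=\Mlt(Q)$ for a finite nonassociative loop $Q$, and such a multiplication group is never cyclic of prime order; this contradiction completes the proof.

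There is no genuinely difficult step: the argument is routine group theory about maximal subgroups. The only point that deserves care is the last one — excluding the case $G_x=1$, i.e.\ a regular group of prime degree, for which the statement literally fails (every point of $\Omega\setminus\{x\}$ being fixed by the trivial group $G_x$). This case does not occur for multiplication groups of nonassociative loops, and if one wanted a self-contained statement one could simply add the hypothesis that $G$ is not regular.
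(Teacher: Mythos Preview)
Your argument follows the same line as the paper's: assume $G_x$ fixes some $y\neq x$, pick $g$ with $xg=y$, and use maximality of $G_x$ to analyze $N_G(G_x)$. The paper's proof is terser --- it simply asserts that maximality forces $g\in G_x$, hence $y=x$ --- and does not explicitly treat the alternative $N_G(G_x)=G$, which you correctly chase down to the regular case $G_x=1$. Your observation that the lemma as literally stated fails for a regular primitive group (cyclic of prime order) is accurate, and your extra care here is warranted even if the paper elides it.

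One small correction: in the paper the lemma is invoked not with $G=\Mlt(Q)$ but with $G=S$, a nonabelian simple factor of the socle, acting on the coset space $S/T_{(i)}$ (see the proof of Theorem~\thmref{affine}). The regular case is excluded there because $S$ is nonabelian simple, not for any loop-theoretic reason about multiplication groups; you should adjust your final paragraph accordingly.
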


\begin{proof}
Assume that $y^g = y$ for all $g\in G_x$ and pick $h\in G$ such that $x^h=y$. Then $G_x\leq G_y = G_x^h$ and $h\in N_G(G_x)$. Since $G_x$ is maximal in $G$, we have $h\in G_x$ and so $y = x$.
\end{proof}

We also need the following consequence of (\cite{JKNV}, Lemma 4.1).

\begin{lemma}
\lemlabel{no4trans}
Let $Q$ be a loop and let $H \leq \Aut(Q)$. Then $H$ is not $4$-transitive on $Q\backslash \{1\}$.
\end{lemma}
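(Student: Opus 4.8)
The statement presumably follows quickly from \cite{JKNV}, Lemma 4.1, but I would give a short, self-contained argument by contradiction. Assume $H$ acts $4$-transitively on $Q^\ast := Q\setminus\{1\}$; since $4$-transitivity forces $|Q^\ast|\ge 4$, we may assume $|Q|\ge 5$ (otherwise there is nothing to prove). The only structure we can exploit is that each element of $H$ is a loop automorphism, hence fixes $1$ and preserves products; the whole point is that this is incompatible with a $4$-transitive (indeed even a $3$-transitive) action once we understand products of distinct non-identity elements.

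First I would determine $ab$ for distinct $a,b\in Q^\ast$. We get $ab\ne a$ at once (otherwise $b=1$ by uniqueness of the solution to $ax=a$) and $ab\ne b$ (otherwise $a=1$ by uniqueness of the solution to $ya=b$). To exclude $ab=1$, suppose $ab=1$ for some such pair; since $|Q^\ast|\ge 3$ we may pick $c\in Q^\ast\setminus\{a,b\}$, and $2$-transitivity supplies $\phi\in H$ with $\phi(a)=a$ and $\phi(b)=c$. Then $ac=\phi(a)\phi(b)=\phi(ab)=\phi(1)=1=ab$, contradicting uniqueness of the solution to $ax=1$. Hence for all distinct $a,b\in Q^\ast$ we have $ab\in Q^\ast\setminus\{a,b\}$.

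Now the endgame: fix distinct $a,b\in Q^\ast$ and put $c:=ab\in Q^\ast\setminus\{a,b\}$. Since $|Q^\ast|\ge 4$ there is some $d\in Q^\ast\setminus\{a,b,c\}$, and $3$-transitivity gives $\phi\in H$ with $\phi(a)=a$, $\phi(b)=b$, $\phi(c)=d$. Then
\[
d \;=\; \phi(c)\;=\;\phi(ab)\;=\;\phi(a)\phi(b)\;=\;ab\;=\;c,
\]
a contradiction. I do not expect a real obstacle here; in fact this shows $H$ is not even $3$-transitive on $Q^\ast$ once $|Q^\ast|\ge 4$, and the ``$4$-transitive'' wording of the statement is simply the uniform way to also cover the vacuous small cases $|Q^\ast|\le 3$. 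The only step requiring a little thought is the exclusion of $ab=1$, which genuinely uses $2$-transitivity (a merely transitive group of automorphisms is not so constrained), so the hypothesis cannot be weakened below $2$-transitivity.
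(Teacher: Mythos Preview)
Your argument is correct. The paper itself gives no proof, merely pointing to \cite{JKNV}, Lemma 4.1, so your self-contained derivation is a genuine addition rather than a restatement. One trivial slip: when excluding $ab=b$ you write ``uniqueness of the solution to $ya=b$'' but mean $yb=b$ (since $1\cdot b=b$); the logic is unaffected. Your observation that only $3$-transitivity is actually used once $|Q^\ast|\ge 4$ is accurate, and your remark that the $4$-transitivity hypothesis is there precisely to absorb the small cases $|Q^\ast|\le 3$ is on point: for instance, $\Aut(\mathbb{Z}_2\times\mathbb{Z}_2)\cong S_3$ does act $3$-transitively on the three nonidentity elements, so the bound cannot be lowered to $3$ uniformly.
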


We first eliminate all but the case of affine type in Proposition \prpref{GS}.

\begin{theorem}
\thmlabel{affine}
Let $Q$ be a simple automorphic $2$-loop. Then $\Mlt(Q)$ is a primitive group of affine type.
\end{theorem}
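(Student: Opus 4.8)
The plan is to use Proposition~\prpref{primitive} to work with $G = \Mlt(Q)$ acting primitively of degree $2^n$, and then to run through the list supplied by Proposition~\prpref{GS}, eliminating each non-affine possibility. So suppose for contradiction that $G$ is not of affine type; then $G$ has a unique minimal normal subgroup $N = S^t$ with $S$ simple nonabelian, and we are in case (i) or (ii) of Proposition~\prpref{GS}. The first reduction I would carry out is to show $t = 1$. The point here is that $\Inn(Q) = G_1$ is a maximal subgroup of $G$ (simplicity of $Q$), and $J$ normalizes $G$ and centralizes $\Inn(Q)$ by Corollary~\corref{normalize}. If $t > 1$, the socle $N$ has a coordinate decomposition permuted by $G$, and the stabilizer $N_1$ is the product of the coordinate stabilizers; I would argue that a nontrivial ``block'' structure coming from the $t$ factors is incompatible with the maximality of $G_1$ together with the extra symmetry provided by $J$ — essentially, $J$ would have to permute the $t$ factors in a way that forces either a proper normal subloop or a contradiction with primitivity via Lemma~\lemref{primitive}. (This is the step I expect to be the most delicate; the precise bookkeeping of how $J$ interacts with the wreath-type structure of $N \rtimes (\text{out part})$ is where the real work is.)

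Once $t = 1$, the socle is a single simple group $S$, and $S$ acts on $Q$ of degree $2^n$ with point stabilizer $S_1$ a maximal parabolic (case (ii)) or $A_{m-1}$ inside $A_m$ (case (i)). In case (ii), $S = PSL(2,q)$ with $q = 2^e - 1 \geq 7$ a Mersenne prime, acting on the $q+1 = 2^e$ points of the projective line. Since $S \leq G \leq \Aut(S) = P\Gamma L(2,q)$ and $q \neq 3,4$, Proposition~\prpref{Drapal} applies directly and yields that $\Mlt(Q) \cong Q$ is a cyclic group — but a cyclic group is of affine type (the regular normal subgroup is the group itself), contradicting our standing assumption. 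So case (ii) is dispatched essentially for free.

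Case (i) is the remaining one: $S = A_m$ with $m = 2^e \geq 8$ acting on the $2^e$-element set of cosets of $A_{m-1}$, i.e.\ the natural action on $m$ points, with $n = e$ (after the $t=1$ reduction). Here I would first note that this action is highly transitive — $A_m$ is $(m-2)$-transitive on $m$ points, so with $m \geq 8$ it is at least $4$-transitive on $Q$, hence $4$-transitive on $Q \setminus \{1\}$ after we account for fixing the neutral element; more precisely, the stabilizer $G_1 \geq (A_m)_1 = A_{m-1}$ acts $(m-3)$-transitively on the remaining $m-1 \geq 7$ points. Since $G_1 = \Inn(Q) \leq \Aut(Q)$ for an automorphic loop, this contradicts Lemma~\lemref{no4trans}, which says no subgroup of $\Aut(Q)$ can be $4$-transitive on $Q \setminus \{1\}$. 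That closes case (i), and with it the theorem: the only surviving alternative in Proposition~\prpref{GS} is that $G = \Mlt(Q)$ is of affine type.

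I expect the $t = 1$ reduction to be the main obstacle; the two individual simple cases then fall quickly to the off-the-shelf results (Proposition~\prpref{Drapal} and Lemma~\lemref{no4trans}), with the only care needed being to track the degree relation $n = te$ and the structure of the point stabilizer $N_1$ so that the cited hypotheses genuinely apply.
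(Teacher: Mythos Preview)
Your handling of the two individual simple cases is essentially what the paper does, but the reduction to $t=1$ is a genuine gap: you give only a heuristic involving $J$, and there is no evident mechanism by which $J$ normalizing $\Mlt(Q)$ and centralizing $\Inn(Q)$ forces the product action to have a single factor. Primitive groups in product action with $t>1$ certainly exist, and nothing you say rules them out here. (In fact the paper's proof of this theorem does not use $J$ at all.)

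The paper avoids the $t=1$ reduction by a different, cleaner idea that exploits the automorphic hypothesis directly. Identify $Q$ with $Q_{(1)}\times\cdots\times Q_{(t)}$ as in the product action, and consider the subgroup $1\times T_{(2)}\times\cdots\times T_{(t)}$ of $N_1\leq \Inn(Q)$. Since inner mappings are automorphisms, its fixed-point set is a subloop; by Lemma~\lemref{primitive} applied to each factor, this fixed-point set is exactly $Q^*=\{(x,1,\ldots,1):x\in Q_{(1)}\}$. One then checks that $\Mlt(Q^*)$ lies in $N_{\Sym(Q^*)}(S)$ and that $T_{(1)}\leq \Aut(Q^*)$. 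Now the single-factor arguments go through for $Q^*$: in case~(i), $A_{m-1}\leq \Aut(Q^*)$ is too highly transitive on $Q^*\setminus\{1\}$, contradicting Lemma~\lemref{no4trans}; in case~(ii), $\Mlt(Q^*)\leq P\Gamma L(2,q)$ and Proposition~\prpref{Drapal} makes $Q^*$ cyclic, which contradicts the transitivity of $T_{(1)}$ on $Q^*\setminus\{1\}$. So the missing idea is precisely this fixed-point subloop construction; once you have it, your endgame for each case is correct.
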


\begin{proof}
Suppose $\Mlt(Q)$ is not of affine type. By Proposition \prpref{GS}, $\Mlt(Q)$ contains a unique minimal normal subgroup $N=S^t$, $t\geq 1$ where $S$ a nonabelian simple group. The subgroup stabilizing $1\in Q$ is $N_1 = T_{(1)} \times \cdots \times T_{(t)}$ where each  $T_{(i)}$ is a maximal subgroup of $S$. In this case, we can identify $Q$ with the cartesian product $Q_{(1)} \times \cdots \times Q_{(t)}$, where $Q_{(i)}$ is the coset space $S/T_{(i)}$. We write the neutral element of $Q$ in the form $1=(1,\ldots,1)$.

Set $Q^* = \{ (x,1,\ldots,1) \mid x\in Q_{(1)} \}$. Since $S$ acts primitively on each
$Q_{(i)}$, Lemma \lemref{primitive} implies that each $T_{(i)}$ acts fixed-point freely on
$Q_{(i)}\setminus \{1\}$. Thus $Q^*$ is precisely the set of fixed points of the subgroup
$1\times T_{(2)} \times \cdots \times T_{(t)} \leq \Inn(Q)$. Since $\Inn(Q)\leq \Aut(Q)$,
$Q^*$ is a subloop of $Q$. Let $H$ denote the stabilizer subgroup of $Q^*$ in $\Mlt(Q)$,
and let $H^* \leq \Sym(Q^*)$ be the induced permutation group; $H^*\cong H/M$ where $M$
consists of those elements of $H$ acting trivially on $Q^*$.

By Dedekind's modular law, since $N\normal \Mlt(Q)$, 
$\hat{N} = N\cap H = S\times T_{(2)} \times \cdots \times T_{(t)}$
is a normal subgroup of $H$. Moreover, $M\cap \hat N = 1\times T_{(2)} \times \cdots
\times T_{(t)}$ acts trivially on $Q^*$, and, by $S \cong \hat N/(M\cap \hat N) \cong
M\hat N/M$, the induced action of $\hat N$ on $Q^*$ is permutation equivalent to the
action of $S$ on $Q_{(1)}$. Since $M\hat N$ is normal in $H$, the permutation group $H^*$
on $Q^*$ has $S\times 1\times \cdots \times 1 \cong S$ as a normal subgroup. Similarly, we
can show that $T_{(1)}\times 1 \times \cdots \times 1 \cong T_{(1)} =S_1$ consists of
automorphisms of the loop $Q^*$. After identifying the groups $S\times 1\times \cdots
\times 1$, $T_{(1)} \times 1\times \cdots \times 1$ with $S$ and $T_{(1)}$, respectively,
we have
\begin{equation}
\eqnlabel{H*}
\Mlt(Q^*) \leq H^* \leq N_{\Sym(Q^*)}(S)\qquad\text{and}\qquad T_{(1)}\leq H^*_1\leq
\Aut(Q^*).
\end{equation}

Now assume that case (i) of Proposition \prpref{GS} holds, that is, $S = A_m$, $T_{(1)} = A_{m-1}$ with $m = 2^e\geq 8$. Then $\Aut(Q^*)$ is $5$-transitive on $Q^*\setminus \{1\}$, which is impossible by Lemma \lemref{no4trans}.

Now assume that case (ii) of Proposition \prpref{GS} holds, that is, $S = PSL(2,q)$ with $q = 2^e - 1\geq 7$ a Mersenne prime, and each $T_{(i)}$ a maximal parabolic subgroup stabilizing a $1$-space. In this case, $N_{\Sym(Q^*)}(PSL(2,q)) = P\Gamma{}L(2,q)$, and so by \eqnref{H*}, $\Mlt(Q^*)\leq P{\Gamma}L(2,q)$. By Proposition \prpref{Drapal}, $Q^*$ is a cyclic group. This contradicts the assumption that $T_{(1)}\leq \Aut(Q^*)$ operates transitively on $Q^*\setminus \{1\}$.
\end{proof}

Now we show that like Theorem \thmref{main}, Theorem \thmref{2loop} reduces to considering commutative automorphic loops of exponent $2$.

\begin{theorem}
\thmlabel{reduce}
Every simple, automorphic $2$-loop is commutative of exponent $2$.
\end{theorem}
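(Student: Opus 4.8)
The plan is to use Theorem~\ref{thm:affine} to reduce immediately to the case where $G = \Mlt(Q)$ is a primitive group of affine type, so that $G$ has an elementary abelian regular normal subgroup $V$ of order $2^n$, the point stabilizer $G_1 = \Inn(Q)$ is an irreducible subgroup of $GL(V) = GL(n,2)$, and $G \le AGL(n,2) = V \rtimes GL(n,2)$. Identifying $Q$ with $V$ as a set (with $1 = 0$), I would then exploit the antiautomorphic inverse property (Proposition~\ref{prp:aaip}) and Corollary~\ref{cor:normalize}: the inversion map $J$ normalizes $\Mlt(Q)$ and centralizes $\Inn(Q)$. The key observation is that $J$ fixes $1 = 0$ and hence, once we know $J \in \langle \Mlt(Q), \ldots\rangle$ acts on $V$, the condition that $J$ centralizes the irreducible group $\Inn(Q) \le GL(V)$ forces $J$ (as a permutation of $V$ fixing $0$) to be constrained by Schur's lemma. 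More precisely, I would argue that $J$, being a permutation of $V$ commuting with the irreducible linear group $\Inn(Q)$ and fixing $0$, must actually be a \emph{linear} map: one shows $J$ permutes the $\Inn(Q)$-orbits, and using that $\Inn(Q)$ acts irreducibly (so $V$ is ``linearly generated'' by any orbit) together with $J$-conjugation preserving the translation subgroup, one deduces $J$ is additive. By Schur's lemma over $GF(2)$ the centralizer algebra is a field, and an involutory element of it that fixes $0$ must be the identity; hence $J = \id$, which means $x = x\inv$ for all $x$, i.e.\ $Q$ has exponent $2$, and exponent $2$ automorphic loops are commutative by the last step of the proof of Theorem~\ref{thm:2power} (or directly: $xy = (xy)\inv = y\inv x\inv = yx$ using Proposition~\ref{prp:aaip}).

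In carrying this out, the first step is to set up coordinates: fix the identification $Q \leftrightarrow V = GF(2)^n$, write each translation $R_x$ and $L_x$ as an element of $AGL(n,2)$, and record that for $a \in V$ the translation part is determined by $R_x(0) = x$, $L_x(0) = x$. The second step is to pin down where $J$ lives: since $J$ normalizes $V \rtimes \Inn(Q)$, fixes $0$, and has order $2$, conjugation by $J$ is an automorphism of $G$ fixing $G_1$ pointwise and sending $V$ to another regular normal subgroup; because $G$ is primitive of affine type with a \emph{unique} minimal normal subgroup (the abelian one, as there is no nonabelian minimal normal subgroup here), $J$ normalizes $V$ as well, hence induces a linear map $\bar J \in GL(V)$. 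The third step is the Schur-lemma argument: $\bar J$ commutes with the irreducible group $\Inn(Q)$, so $\bar J$ lies in the division algebra $\End_{GF(2)[\Inn(Q)]}(V)$, a finite field; an involution in a field of characteristic $2$ is trivial, so $\bar J = 1$. Finally, $J$ and $\bar J$ agree as permutations of $V$ (both fix $0$ and $J x = \bar J x$ follows from comparing the translation parts of $J R_x J\inv$), so $J = \id$.

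I would then close by noting exponent $2$ gives commutativity via $xy = (xy)\inv = y\inv x\inv = yx$, and that by Proposition~\ref{prp:composite}(3) — or rather directly — a commutative exponent $2$ automorphic loop is exactly what the final section will handle; but for the statement of this theorem we only need ``commutative of exponent $2$,'' which is now established.

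The main obstacle I anticipate is the second step, namely rigorously showing that $J$ normalizes the translation subgroup $V$ (equivalently, acts linearly on $Q$). The danger is that an arbitrary permutation of $V$ fixing $0$ and centralizing $\Inn(Q)$ need not be additive; one genuinely needs to use that $J$ \emph{normalizes} $\Mlt(Q)$, not merely that it centralizes $\Inn(Q)$. The cleanest route is probably: $J$ normalizes $G = \Mlt(Q)$, hence permutes the minimal normal subgroups of $G$; since the affine case has a unique minimal normal subgroup $V$ (any further minimal normal subgroup would either be abelian, hence equal to $V$ by regularity/maximality of $G_1$, or nonabelian, contradicting affine type), $J$ normalizes $V$; and a permutation of the set $V$ normalizing the regular translation action of $V$ on itself and fixing $0$ is necessarily an automorphism of the group $(V,+)$, i.e.\ linear. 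That last sub-claim — the normalizer in $\Sym(V)$ of the regular subgroup $V$, intersected with the stabilizer of $0$, equals $\Aut(V,+)$ — is the holonomy-type fact $N_{\Sym(V)}(V)_0 = \Aut(V)$, which is standard. Once linearity of $J$ is in hand, the rest is a short application of Schur's lemma.
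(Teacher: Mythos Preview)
Your proposal is correct and follows the same overall architecture as the paper: reduce to affine type via Theorem~\ref{thm:affine}, observe that $J$ normalizes the socle $U$ (the paper simply notes $U=\soc(\Mlt(Q))$ is characteristic in $\Mlt(Q)$, which is the cleanest way to phrase your ``unique minimal normal subgroup'' argument), and then use irreducibility of $\Inn(Q)$ together with $J$ centralizing $\Inn(Q)$ to force $J=\id_Q$, whence exponent~$2$ and commutativity via the antiautomorphic inverse property.

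The only substantive difference is in the endgame, precisely at the step you flagged as the main obstacle. You pass through the holomorph to make $J$ linear on $U$ and then invoke Schur's lemma (an involution in a finite field of characteristic~$2$ is trivial). The paper bypasses linearity altogether: since $U\langle J\rangle$ is a $2$-group, $C_U(J)\neq 1$; and because $J$ centralizes $\Inn(Q)$, the subgroup $C_U(J)\leq U$ is $\Inn(Q)$-invariant, so irreducibility gives $C_U(J)=U$. Then $J$ centralizes the regular abelian subgroup $U$ and fixes $1$, hence $J=\id_Q$. Your Schur route is perfectly valid, but the paper's $p$-group/center argument is shorter and sidesteps exactly the holomorph verification you were worried about.
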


\begin{proof}
Let $Q$ be a simple, automorphic $2$-loop. By Theorem \thmref{affine}, $\Mlt(Q)$ is of affine type. Thus $U = \soc(\Mlt(Q))$ is a regular, normal, elementary abelian $2$-subgroup. We identify $U$ with a $GF(2)$-vector space and we identify $\Inn(Q)$ with an irreducible subgroup of $GL(U)$.
Since $U$ is characteristic in $\Mlt(Q)$, Corollary \corref{normalize} gives $J\in N_{\Sym(Q)}(U)$. Hence the group $U\langle J\rangle$ is a $2$-group, and so $1\neq Z(U\langle J\rangle) = C_U(J)$. Since $J$ centralizes $\Inn(Q)$, irreducibility of $\Inn(Q)$ gives $C_U(J) = U$ and so $J\upharpoonright U = \id_U$. Thus $J = \id_Q$ and so $Q$ has exponent $2$. Then $Q$ is commutative since $J$ is an antiautomorphism (Proposition \prpref{aaip}).
\end{proof}

\section{Automorphic loops and Lie algebras}

We can now prove the main results of the paper by eliminating the case of affine type in Proposition \prpref{GS}. In the proof, we construct a simple Lie algebra from a hypothetical simple commutative automorphic loop of exponent $2$. The ``crust of a thin sandwich'' theorem of Zel'manov and Kostrikin will lead to a contradiction.

Let $Q$ be a finite, simple, commutative, automorphic loop of exponent $2$. \emph{We assume from now on} that $Q$ is not associative, and we will work toward a contradiction. Again, $\Mlt(Q)$ is of affine type and we identify $U = \soc(\Mlt(Q))$ with a $GF(2)$-vector space, the operation of which we now write additively. Once again, we identify $\Inn(Q)$ with an irreducible subgroup of $GL(U)$. Each right translation $R_x$, $x\in Q$ can be factored as
$R_x = h_x u_x$ for a unique $h_x\in \Inn(Q)$ and a unique $u_x\in U$.

Set $R_{x,y} = R_x R_y R_{xy}\inv$ and note that $R_{x,y}\in \Inn(Q)$. Then
\[
R_{x,y} h_{xy} u_{xy} = R_{x,y} R_{xy} = R_x R_y
= h_x u_x h_y u_y = h_x h_y (u_x^{h_y} + u_y)\,.
\]
Therefore
\begin{equation}
\eqnlabel{factors}
R_{x,y} = h_x h_y h_{xy}^{-1}\qquad\text{and}\qquad u_{xy} = u_x^{h_y} + u_y\,.
\end{equation}
Now we also have a one-to-one correspondence between $U$ and the set $\setof{h_x}{x\in Q}$. Abusing notation a bit, we may thus index elements of the latter set by elements of $U$: $h_u = h_x$ where $R_x = h_x u$. This allows us to define an isomorphic copy of $Q$ on $U$ by
\begin{equation}
\eqnlabel{newop}
u\circ v = u^{h_v} + v\,.
\end{equation}
Denote the right translations in $(U,\circ)$ by $R^{\circ}_u : v \mapsto v\circ u$, and for $u,v\in U$, set $R^{\circ}_{u,v} = R^{\circ}_u R^{\circ}_v (R^{\circ}_{u\circ v})\inv$. For all $u,v,w\in U$
\begin{equation}
\eqnlabel{Uinn}
w R^{\circ}_{u,v}
= \{[(w^{h_u} + u)^{h_v} + v] + (u^{h_v} + v)\}^{h_{u\circ v}\inv}
= \{(w^{h_u h_v} + u^{h_v} + v + u^{h_v} + v\}^{h_{u\circ v}\inv}
= w^{h_u h_v h_{u\circ v}\inv}
\end{equation}

\begin{lemma}
\lemlabel{inn=inn}
$\Inn(U,\circ) = \Inn(Q) = \langle h_x\ |\ x\in Q\rangle$.
\end{lemma}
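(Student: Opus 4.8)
The plan is to recognise the bijection $\phi\colon Q\to U$, $x\mapsto u_x$, as being at once a loop isomorphism from $Q$ onto $(U,\circ)$ and the identification under which $\Inn(Q)$ was realised as a subgroup of $GL(U)$; then $\Inn(U,\circ)=\Inn(Q)$ is transport of structure, and the equality with $\langle h_x\mid x\in Q\rangle$ follows from \eqnref{Uinn} and an elementary fact about multiplication groups.

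First I would check that $\phi$ is an isomorphism of loops. It is injective: if $u_x=u_y$, then $R_x=(R_xR_y\inv)R_y=(h_xh_y\inv)R_y$, and since $h_xh_y\inv\in\Inn(Q)$ fixes $1$, this gives $x=1R_x=1R_y=y$; as $U$ is regular on $Q$ we have $|U|=|Q|$, so $\phi$ is a bijection. Since $R_1=\id$ forces $u_1=0$, we have $\phi(1)=0$, and by \eqnref{factors}, using the convention $h_{u_y}=h_y$, $\phi(xy)=u_{xy}=u_x^{h_y}+u_y=u_x\circ u_y=\phi(x)\circ\phi(y)$. Hence $(U,\circ)$ is a loop and $\phi$ an isomorphism, so $\Mlt(U,\circ)=\phi\inv\Mlt(Q)\phi$ and $\Inn(U,\circ)=\phi\inv\Inn(Q)\phi$, with $\Inn(Q)$ here regarded as a permutation group on $Q$.

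Next I would verify that conjugation by $\phi$ carries the action of $\Inn(Q)$ on $Q$ to its conjugation (linear) action on $U$, that is, to the embedding $\Inn(Q)\le GL(U)$ fixed at the outset. Since $Q$ is automorphic, every $h\in\Inn(Q)$ is an automorphism, so, writing $xh$ for the image of $x$ under $h$, we have $R_{xh}=R_x^h=h_x^h u_x^h$ with $h_x^h\in\Inn(Q)$ and $u_x^h\in U$ ($U$ being normal in $\Mlt(Q)$); by uniqueness of the factorisation $R_{xh}=h_{xh}u_{xh}$ we get $u_{xh}=u_x^h$, i.e.\ $\phi(xh)=\phi(x)^h$. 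Thus for each $h$ the conjugate $\phi\inv h\phi$ of the permutation $h$ of $Q$ is the linear map $u\mapsto u^h$ on $U$, so $\phi\inv\Inn(Q)\phi$ is exactly the subgroup of $GL(U)$ with which $\Inn(Q)$ was identified, whence $\Inn(U,\circ)=\Inn(Q)$. Finally, $\langle h_x\mid x\in Q\rangle\le\Inn(Q)$ is clear from $R_x=h_xu_x$ with $h_x\in\Inn(Q)$; conversely, since $(U,\circ)$ is commutative its inner mapping group is generated by the maps $R^\circ_{u,v}$, which by \eqnref{Uinn} equal $h_uh_vh_{u\circ v}\inv\in\langle h_x\rangle$, so $\Inn(U,\circ)\le\langle h_x\rangle$ and the chain $\Inn(U,\circ)=\Inn(Q)\supseteq\langle h_x\rangle\supseteq\Inn(U,\circ)$ closes. (Alternatively, avoiding the generation fact: $\Mlt(Q)=\langle R_x\mid x\in Q\rangle$ is the semidirect product of $\Inn(Q)$ and the regular normal subgroup $U$, so, $U$ being normal, the subgroup $\langle h_x\rangle U$ contains every $R_x$ and hence equals $\Mlt(Q)$; comparing orders, using $\langle h_x\rangle\cap U\le\Inn(Q)\cap U=1$, gives $|\langle h_x\rangle|=|\Inn(Q)|$.)

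The step I expect to demand the most care is the middle one: confirming that the abstract isomorphism $\phi$ genuinely intertwines the permutation action of $\Inn(Q)$ on $Q$ with its linear action on $U$, so that $\phi\inv\Inn(Q)\phi$ is literally the subgroup of $GL(U)$ denoted $\Inn(Q)$ rather than merely an isomorphic copy of it. Everything else is routine bookkeeping with the semidirect decomposition of $\Mlt(Q)$ and with the identities already established in \eqnref{factors}--\eqnref{Uinn}.
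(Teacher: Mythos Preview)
Your argument is correct. The paper's own proof uses the same two ingredients---the generation of the inner mapping group of a commutative loop by the maps $R_{x,y}$, and the identities \eqnref{factors} and \eqnref{Uinn}---but assembles them in a different order: it first proves $\Inn(Q)=\langle h_x\rangle$ directly from \eqnref{factors}, then shows $\Inn(U,\circ)\le\langle h_x\rangle$ from \eqnref{Uinn}, and finally concludes $\Inn(U,\circ)=\Inn(Q)$ by observing that the two groups are abstractly isomorphic (since $(U,\circ)\cong Q$) and one is contained in the other, so by finiteness they coincide. Your route instead establishes $\Inn(U,\circ)=\Inn(Q)$ first, by the explicit intertwining $\phi(xh)=\phi(x)^h$, and then sandwiches $\langle h_x\rangle$ between them. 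The paper's version is shorter; yours is more transparent about the point you correctly flagged as delicate, namely that the transported copy $\phi^{-1}\Inn(Q)\phi$ and the conjugation-action copy of $\Inn(Q)$ inside $GL(U)$ are literally the same subgroup of $\Sym(U)$, not merely isomorphic. The paper's cardinality trick sidesteps this verification but at the cost of making the equality look slightly miraculous.
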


\begin{proof}(\emph{cf}. \cite{JKNV}, Lemma 6.1)
Set $H = \langle h_x\ |\ x\in Q\rangle$. Since each $h_x\in \Inn(Q)$, $H\leq \Inn(Q)$.
Because $Q$ is commutative, $\Inn(Q)$ is generated by the mappings $R_{x,y}$ \cite{BruckCont}. By \eqnref{factors}, we have $R_{x,y} = h_x h_y h_{xy}^{-1}$, and so $\Inn(Q) = \langle h_x h_y h_{xy}^{-1}\ |\ x,y\in Q\rangle\leq H$. Similarly, by \eqnref{Uinn}, $\Inn(U,\circ)\leq H$. But $\Inn(Q)$ and $\Inn(U)$ are isomorphic, and hence, by finiteness, equal.
\end{proof}

\begin{lemma}
\lemlabel{aut_char}
For all $u,v\in U$, $h_u h_v = h_v h_{u^{h_v}}$.
\end{lemma}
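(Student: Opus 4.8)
The plan is to exploit the fact that each $h_v$ is an \emph{automorphism} of the loop $(U,\circ)$, not merely an inner mapping. Indeed, by Lemma~\lemref{inn=inn} we have $h_v \in \Inn(U,\circ)$, and since $(U,\circ)$ is an isomorphic copy of $Q$ it is itself automorphic, so $\Inn(U,\circ) \le \Aut(U,\circ)$. The claimed identity will then fall out of comparing two expressions for the conjugate $(R^\circ_u)^{h_v} = h_v^{-1}R^\circ_u h_v$.

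First I would record the elementary fact that a loop automorphism $\phi$ of $(U,\circ)$ satisfies $(R^\circ_u)^\phi = R^\circ_{u^\phi}$, since $w\,(\phi^{-1}R^\circ_u\phi) = ((w^{\phi^{-1}})\circ u)^\phi = w\circ u^\phi$. Taking $\phi = h_v$ gives $(R^\circ_u)^{h_v} = R^\circ_{u^{h_v}}$. Second, I would compute $(R^\circ_u)^{h_v}$ directly from the definition~\eqnref{newop}, using that $h_v$, as an element of $GL(U)$, is additive on $U$: for all $w\in U$,
\[
w\,(h_v^{-1}R^\circ_u h_v) = \bigl( (w^{h_v^{-1}})^{h_u} + u\bigr)^{h_v} = w^{h_v^{-1}h_u h_v} + u^{h_v}.
\]
Comparing this with $w\,R^\circ_{u^{h_v}} = w^{h_{u^{h_v}}} + u^{h_v}$ and letting $w$ range over $U$ forces $h_v^{-1}h_u h_v = h_{u^{h_v}}$ in $GL(U)$, which rearranges to $h_u h_v = h_v h_{u^{h_v}}$.

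I do not expect a genuine obstacle here; the one point that needs care is precisely the passage from ``$h_v$ is an inner mapping'' to ``$h_v$ is an automorphism of $(U,\circ)$'', which is exactly what Lemma~\lemref{inn=inn} was set up to deliver (together with the observation that being automorphic is an isomorphism-invariant property of loops). One could alternatively phrase the argument as a uniqueness statement for the factorization $R^\circ_u = h_u\cdot t_u$ into an inner mapping and the additive translation $t_u\colon w\mapsto w+u$, coming from the semidirect decomposition $\Mlt(U,\circ) = U\rtimes \Inn(U,\circ)$; but the direct computation above sidesteps having to identify $\Mlt(U,\circ)$ with $\Mlt(Q)$ explicitly.
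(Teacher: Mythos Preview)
Your argument is correct and is essentially the same as the paper's proof: both use that $h_v\in\Inn(U,\circ)\le\Aut(U,\circ)$ and the $GF(2)$-linearity of $h_v$ to compare two expressions involving $h_u$, $h_v$, and $h_{u^{h_v}}$. The only difference is cosmetic: the paper applies $h_v$ directly to the product $w\circ u$ and expands both $(w\circ u)^{h_v}$ and $w^{h_v}\circ u^{h_v}$ via \eqnref{newop}, whereas you recast the same computation as the identity $(R^\circ_u)^{h_v}=R^\circ_{u^{h_v}}$ for conjugation of right translations by an automorphism---substituting $w\mapsto w^{h_v}$ converts one into the other.
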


\begin{proof}
Since $Q$ is automorphic, we have for all $u,v,w\in U$,
\[
w^{h_v h_{u^{h_v}}} + u^{h_v} = w^{h_v}\circ u^{h_v}
= (w\circ u)^{h_v} = (w^{h_u} + u)^{h_v} = w^{h_u h_v} + u^{h_v}\,.
\]
The desired result follows immediately.
\end{proof}

Next we define a new binary operation on $U$ as follows:
\begin{equation}
\eqnlabel{lieop}
[u,v] = u + v + u\circ v
\end{equation}
for all $u,v\in U$. Evidently, $[\cdot,\cdot ]$ is commutative and $[u,u] = 0$ for all $u\in U$. In addition, $[\cdot,\cdot ]$ turns out to be $GF(2)$-bilinear.

\begin{proposition}[\cite{Wright}, Theorem 4]
\prplabel{Wright}
$(U,+,[\cdot, \cdot ])$ is a simple, nonassociative algebra over $GF(2)$.
\end{proposition}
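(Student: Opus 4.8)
The plan is to verify $GF(2)$-bilinearity of the bracket, then show that the ideals of $(U,+,[\cdot,\cdot])$ are precisely the $\Inn(Q)$-submodules of $U$ (so that irreducibility of $\Inn(Q)$ forces the only ideals to be $0$ and $U$), and finally transfer nonassociativity from $Q$ to the bracket.

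First I would reduce the bracket to a usable form. Combining \eqnref{lieop} with \eqnref{newop} and using $v+v=0$ in the $GF(2)$-space $U$, one gets
\[
[u,v] = u + v + u^{h_v} + v = u + u^{h_v}.
\]
Since $h_v\in GL(U)$, the map $u\mapsto u+u^{h_v}$ is $GF(2)$-linear, which gives additivity of $[\cdot,\cdot]$ in the first argument; additivity in the second then follows from commutativity of $[\cdot,\cdot]$ (which holds because $\circ$ is commutative), and over $GF(2)$ additivity is bilinearity. Together with $[u,u]=0$, this makes $(U,+,[\cdot,\cdot])$ a commutative $GF(2)$-algebra, and it remains to prove simplicity and nonassociativity.

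For simplicity, take a $GF(2)$-subspace $I\le U$. By commutativity, $[u,w]=[w,u]=w+w^{h_u}$ for $u\in U$ and $w\in I$, so $[U,I]\subseteq I$ if and only if $w^{h_u}\in I$ for all $w\in I$ and all $u\in U$, i.e. if and only if $I$ is invariant under each $h_u$ and hence, by Lemma \lemref{inn=inn}, under all of $\Inn(Q)=\langle h_u\mid u\in U\rangle$. As $\Inn(Q)$ acts irreducibly on $U$, the only ideals of the algebra are $0$ and $U$. To see the algebra is not the zero algebra, note that $[U,U]=0$ would force $u^{h_v}=u$ for all $u,v$, hence every $h_v=\id_U$; then $u\circ v = u^{h_v}+v = u+v$, so $Q\cong(U,\circ)$ would be an associative elementary abelian $2$-group, contradicting the standing assumption. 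Hence the algebra is simple.

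Finally, for nonassociativity, \eqnref{lieop} gives $u\circ v = u+v+[u,v]$, so by bilinearity a brief computation in characteristic $2$ yields
\[
(u\circ v)\circ w + u\circ(v\circ w) = [[u,v],w] + [u,[v,w]].
\]
Since $(U,\circ)\cong Q$ is nonassociative, the left-hand side is nonzero for some $u,v,w$, so $[\cdot,\cdot]$ is not associative. The whole argument is essentially bookkeeping; the one point needing care is the simplicity step, where one must pass from the definition of an ideal to $\Inn(Q)$-invariance via the identity $[u,w]=w+w^{h_u}$ together with Lemma \lemref{inn=inn}, and must not forget to exclude the zero algebra — which is precisely where the non-associativity of $Q$ is used a second time.
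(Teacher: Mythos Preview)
Your argument is correct. The paper does not prove this proposition at all; it simply cites Wright's Theorem~4, having only remarked beforehand that $[\cdot,\cdot]$ is commutative, satisfies $[u,u]=0$, and ``turns out to be $GF(2)$-bilinear''. You supply a complete self-contained proof: the reduction $[u,v]=u+u^{h_v}$ immediately gives linearity in the first variable and, via commutativity, bilinearity; the identification of ideals with $\Inn(Q)$-submodules (using Lemma~\lemref{inn=inn}) together with irreducibility yields simplicity; and the associator identity $(u\circ v)\circ w + u\circ(v\circ w)=[[u,v],w]+[u,[v,w]]$ transfers nonassociativity from $(U,\circ)\cong Q$ to the bracket. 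Each of these steps checks out. The only comment is that your treatment of the zero-algebra case and of nonassociativity overlap slightly: once you know the bracket is not associative, the algebra is certainly nonzero, so the separate exclusion of $[U,U]=0$ is not strictly needed (though it does no harm and makes the simplicity claim self-contained).
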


For $u\in U$, let $\ad(u) : U\to U; v\mapsto [v,u]$ denote the right multiplication mapping in the algebra $(U,+,[\cdot, \cdot ])$. We use these notation conventions in anticipation of the following result.

\begin{lemma}
\lemlabel{lie_alg}
$(U,+,[\cdot, \cdot ])$ is a simple Lie algebra over $GF(2)$ satisfying
\begin{equation}
\eqnlabel{premedial}
\ad(u)\ad([u,w]) = 0
\end{equation}
for all $u,w\in U$.
\end{lemma}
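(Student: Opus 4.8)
The plan is to establish two things: first, that $(U,+,[\cdot,\cdot])$ satisfies the Lie identities (anticommutativity is already known, so only the Jacobi identity remains, which in characteristic $2$ amounts to the linearized form plus $[u,u]=0$), and second, that the medial-type identity $\ad(u)\ad([u,w])=0$ holds. The second fact will actually do most of the work: it is the algebraic avatar of the relation $h_uh_v = h_vh_{u^{h_v}}$ from Lemma~\lemref{aut_char}, and once one has a strong enough ``left-alternative'' or ``nucleus'' type identity for $[\cdot,\cdot]$, the Jacobi identity for a commutative algebra over $GF(2)$ tends to fall out by a short manipulation.

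\medskip

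First I would unwind the definitions to get a usable formula for $\ad(u)$. From \eqnref{lieop} and \eqnref{newop}, $[v,u] = v + u + v\circ u = v + u + v^{h_u} + u = v + v^{h_u} = v(\id + h_u)$, so $\ad(u) = \id + h_u$ as a $GF(2)$-linear operator on $U$; in particular $GF(2)$-bilinearity of $[\cdot,\cdot]$ (asserted just before Proposition~\prpref{Wright}) is immediate, and composition of ad-maps translates into products of the operators $\id + h_u$. Next I would translate Lemma~\lemref{aut_char}: the identity $h_uh_v = h_vh_{u^{h_v}}$ gives $(\id+h_u)(\id+h_v) = \id + h_u + h_v + h_uh_v$ and similarly with the right side, so comparing yields $h_u + h_{u^{h_v}} = h_u + h_v + h_uh_v + (h_v + h_vh_{u^{h_v}})$, i.e.\ after cancellation $h_{u^{h_v}} = h_uh_v h_v\inv$... more simply, from $h_uh_v = h_vh_{u^{h_v}}$ one gets $h_{u^{h_v}} = h_v\inv h_u h_v = h_u^{h_v}$, which says $\ad(u^{h_v}) = \ad(u)^{h_v}$, i.e.\ $h_v$ acts on the operators $\ad(\cdot)$ by conjugation compatibly with its action on $U$. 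Now observe that $u^{h_u} = u$ (since $R_x$ fixes... actually from \eqnref{newop}, $u\circ u = u^{h_u}+u$, and $u\circ u$ corresponds to $x\cdot x$ which in $Q$ of exponent $2$ equals $1$, corresponding to $0\in U$, so $u^{h_u} = u$), hence $h_u$ fixes $u$, hence $\ad(u)u = u + u^{h_u} = 0$, recovering $[u,u]=0$ operator-theoretically.

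\medskip

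For \eqnref{premedial}: I want $\ad(u)\ad([u,w]) = 0$, i.e.\ $(\id+h_u)(\id+h_{[u,w]}) = 0$ on $U$, i.e.\ $h_u + h_{[u,w]} + h_uh_{[u,w]} = \id$, i.e.\ $h_{[u,w]}(\id + h_u) = \id + h_u$ ... wait, let me instead aim for $h_{[u,w]} = h_u\inv$ composed appropriately. The cleanest route: $[u,w] = w(\id+h_u) = w^{h_u} + w$, so I need to compute $h_{w^{h_u}+w}$ using the cocycle-type relation \eqnref{factors}, $u_{xy} = u_x^{h_y} + u_y$, rewritten on $U$ via \eqnref{Uinn} and \eqnref{newop}. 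Actually the key is the identity $h_{u\circ v} = h_u h_v h_{u,v}\inv$ from \eqnref{Uinn} together with $\ad(u) = \id+h_u$; since the $h$'s are indexed by elements of $U$, I would compute $h_{[u,w]} = h_{w\circ w^{h_u}}$ (using $[u,w] = w^{h_u}+w = w^{h_u}\circ w$ since $a\circ b = a^{h_b}+b$ and we'd need $w^{h_u h_w} = w^{h_u}$, which holds because $h_w$ fixes $w$... no, it fixes $w$ not $w^{h_u}$). This is the delicate bookkeeping step. I expect the identity to emerge from: apply Lemma~\lemref{aut_char} with the roles arranged so that $u^{h_v}$ appears as $[u,w]$-related, then use $h_uh_{u} = h_{u}h_{u^{h_u}} = h_uh_u$ (trivial) and more usefully the fact that $\ad(u)^2$ and $\ad(u)$ interact via $h_u$ fixing $u$.

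\medskip

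\textbf{Main obstacle.} The hard part will be the verification of \eqnref{premedial}; everything else is a translation of Lemmas~\lemref{inn=inn} and \lemref{aut_char}. Specifically, proving $\ad(u)\ad([u,w]) = 0$ requires showing $h_{[u,w]}$ commutes with $h_u$ and equals $h_u$ on the image of $\ad(u)$ — equivalently, that $h_u$ acts trivially on the subspace $[u,U]$, or that $[u,[u,w]] = 0$ for all $w$. That last formulation, $[u,[u,w]]=0$, is exactly $\ad(u)^2 = 0$? No — it is $[u, \cdot]$ applied twice being zero only on $[u,w]$, i.e.\ $u$ is a \emph{sandwich element}: $\ad(u)U\ad(u) \subseteq$ ... precisely, $\ad(u)\ad([u,w]) = \ad(u)\ad(\ad(u)w)$, and I claim this vanishes because in $(U,\circ)$ the map $R^\circ_u$ together with $h_u$ generate a small group on which exponent-$2$ forces $(\id+h_u)$ to be ``nilpotent of index $2$ after composing with translation.'' I would prove it by the explicit computation: set $z = [u,w]$; then $[z,u] = z^{h_u}+z = z(\id+h_u)$, and I must show $(\id+h_z)$ kills $u(\id+h_u)$... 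The cleanest finish is probably to show directly from \eqnref{factors}/\eqnref{Uinn} that $h_{a}h_{b} = h_{b}h_{a}$ whenever $b\in [a,U]$, using that $[a,U]$ is $h_a$-invariant with $h_a$ acting as identity on it (because $\ad(a)$ restricted to $[a,U]$ is zero iff $a$ is a sandwich, which is what Wright's structure or the exponent-$2$ hypothesis should supply). If that invariance fails to be immediate, one falls back on a brute-force expansion of $w\,\ad(u)\ad([u,w])$ using the operator identity $\ad(u)=\id+h_u$ and Lemma~\lemref{aut_char} twice — a finite calculation that I would expect to close, but which is the one genuinely computational point in the argument.
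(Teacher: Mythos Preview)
Your setup is correct: the identity $\ad(u) = \id + h_u$ is the right starting point, and the conjugation relation $\ad(u^{h_v}) = \ad(u)^{h_v}$ does follow from Lemma~\lemref{aut_char}. But from there the proposal stalls: you never actually derive \eqnref{premedial}, and the Jacobi identity is left at ``tends to fall out.'' Worse, one of your proposed routes is aimed at the wrong target. The claim ``$h_u$ acts trivially on $[u,U]$'' unpacks to $\ad(u)^2 = 0$ (i.e., $[[w,u],u]=0$ for all $w$), which is \emph{not} the identity \eqnref{premedial} (i.e., $[[v,u],[u,w]]=0$ for all $v,w$); indeed $\ad(u)^2=0$ for all $u$ would linearize in characteristic $2$ to $[\ad(u),\ad(v)]=0$ and, once Jacobi is known, force $\ad([u,v])=0$, contradicting simplicity of a nonabelian algebra. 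So that reformulation is not merely unproved but false in the situation at hand. The other suggestions (computing $h_{[u,w]}$ from the cocycle, or hoping a ``brute-force expansion \ldots would close'') are not carried out.

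The missing idea is short. Rather than passing to the conjugation form of Lemma~\lemref{aut_char}, expand both sides of $h_uh_v = h_vh_{u^{h_v}}$ directly via $h_x = \id + \ad(x)$, using the \emph{additive} observation $u^{h_v} = u + [u,v]$ and hence $\ad(u^{h_v}) = \ad(u) + \ad([u,v])$ by $GF(2)$-linearity of $\ad$. Equating the two expansions gives
\[
\ad(u)\ad(v) + \ad(v)\ad(u) + \ad([u,v]) = \ad(v)\ad([u,v]).
\]
The left side is symmetric in $u,v$; hence so is the right, yielding $\ad(u)\ad([v,u]) = \ad(v)\ad([u,v])$. Linearize this last identity in $v$ (replace $v$ by $v+w$) and cancel against the original to obtain $\ad(v)\ad([u,w]) + \ad(w)\ad([u,v]) = 0$; setting $v=u$ gives \eqnref{premedial}. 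Finally, feeding \eqnref{premedial} back into the displayed equation kills its right side and what remains is exactly the Jacobi identity. The two-step trick---symmetry, then linearization---is the actual content of the proof, and none of your sketched approaches reaches it.
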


\begin{proof}
We have $(v)\ad(u) = u + v + v^{h_u} + u = v(\id_U + h_u)$, that is,
\[
h_u = \id_U + \ad(u)
\]
for all $u\in U$. We now use Lemma \lemref{aut_char}. First we compute
\[
h_u h_v = (\id_U + \ad(u))(\id_U + \ad(v)) = \id_U + \ad(u) + \ad(v) + \ad(u) \ad(v)\,.
\]
Since $\ad(u+ (u)\ad(v)) = \ad(u) + \ad([u,v])$, we also have
\begin{align*}
h_v h_{u^{h_v}} &= (\id_U + \ad(v))(\id_U + \ad(u) + \ad([u,v])) \\
&= \id_U + \ad(v) + \ad(u) + \ad(v) \ad(u) + \ad([u,v]) + \ad(v) \ad([u,v])\,.
\end{align*}
Equating both expressions, we have
\[
\ad(u) \ad(v) = \ad(v) \ad(u) + \ad([u,v]) + \ad(v) \ad([u,v])\,,
\]
or equivalently,
\begin{equation}
\eqnlabel{J}
\ad(u) \ad(v) + \ad(v) \ad(u) + \ad([u,v]) = \ad(v) \ad([u,v])\,.
\end{equation}
Since the left side is invariant under switching the roles of $u$ and $v$, so is the right side, and thus we have
\begin{equation}
\eqnlabel{pretmp}
\ad(u) \ad([v,u]) = \ad(v) \ad([u,v])
\end{equation}
for all $u,v\in U$. Now we linearize \eqnref{pretmp} by replacing $v$ with $v + w$. We get
\begin{align*}
&\ad(u) \ad([v,u]) + \ad(u) \ad([w,u]) \\ &=
\ad(v) \ad([u,v]) + \ad(v) \ad([u,w]) + \ad(w) \ad([u,v]) + \ad(w) \ad([u,w])\,.
\end{align*}
Using \eqnref{pretmp} to cancel terms, we obtain
\[
\ad(v) \ad([u,w]) + \ad(w) \ad([u,v]) = 0\,.
\]
Set $v = u$ and use $[u,u] = 0$ to get
\[
\ad(u) \ad([u,w]) = 0\,.
\]
This establishes \eqnref{premedial}. Applying \eqnref{premedial} to \eqnref{J}, we have
\[
\ad(u) \ad(v) + \ad(v) \ad(u) + \ad([u,v]) = 0\,,
\]
which is precisely the Jacobi identity (in characteristic $2$). Therefore $(U,+,[\cdot,\cdot])$ is a Lie algebra. The simplicity was already mentioned in Proposition \prpref{Wright}.
\end{proof}

For the final step, we will need the well-known ``crust of a thin sandwich'' theorem of Zel'manov and Kostrikin \cite{ZK}.

\begin{proposition}
\prplabel{sandwich}
Let $\mathfrak{g}$ be a Lie ring generated by a finite collection of elements $a$ satisfying $\ad(a)^2 = 0$ and $\ad(a)\ad(x)\ad(a) = 0$ for all $x\in \mathfrak{g}$. Then $\mathfrak{g}$ is nilpotent.
\end{proposition}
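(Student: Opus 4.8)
\medskip
\noindent\emph{Proof strategy.}
This is the ``crust of a thin sandwich'' theorem of Kostrikin and Zel'manov, a deep result, so I do not expect a short self-contained argument; the plan is to follow their strategy, whose guiding idea is that a Lie algebra generated by finitely many \emph{sandwich elements} --- elements $a$ with $\ad(a)^2=0$ and $\ad(a)\ad(x)\ad(a)=0$ for all $x$ --- is controlled by an associated nil Jordan algebra of bounded index, and is therefore nilpotent with class bounded by a function of the number of generators. First I would reduce to Lie algebras over a field. Since all hypotheses pass to homomorphic images, it suffices to treat the universal object $\mathfrak F$: the free Lie ring on $m$ generators $x_1,\dots,x_m$ modulo the ideal generated by all instances $[[z,x_i],x_i]=0$ and $[[[z,x_i],w],x_i]=0$ ($1\leq i\leq m$; $z,w$ arbitrary). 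This $\mathfrak F$ is graded by bracket degree, with each component $\mathfrak F_n$ a finitely generated abelian group. If one can bound, uniformly in $m$, the nilpotency class of $\mathfrak F\otimes_{\mathbb Z}k$ for $k=\mathbb Q$ and for $k=\mathbb F_p$ (every prime $p$), then for $n$ past that bound $\mathfrak F_n\otimes\mathbb Q=0$, so $\mathfrak F_n$ is finite, while $\mathfrak F_n/p\mathfrak F_n=0$, so $\mathfrak F_n$ is $p$-divisible for every $p$; a finite abelian group with this property is trivial, so $\mathfrak F$, and hence $\mathfrak g$, is nilpotent. Thus it is enough to bound the class of a Lie algebra over a field generated by $m$ sandwiches.

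Next I would develop the combinatorics of sandwich elements over a field. Linearizing $\ad(a)^2=0$ and $\ad(a)\ad(x)\ad(a)=0$ yields a family of quadratic relations among the operators $\ad(a_1),\dots,\ad(a_m)$, and the Kostrikin lemmas then show that suitable short Lie monomials in the $a_i$ are again sandwich elements. This ``propagation'' of the sandwich property down the lower central series is what allows an induction on the number of generators (and on the degrees involved), reducing the general bound to a few base cases together with the next step.

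The heart of the proof is Zel'manov's translation to Jordan algebras. One passes to the associative subalgebra $A\leq\End(\mathfrak g)$ generated by the operators $\ad(a)$ with $a$ a sandwich element; since $\mathfrak g$ is generated by sandwiches, $\ad(\mathfrak g)\subseteq A$, and the identity $\ad(a)\ad(x)\ad(a)=0$ becomes $\ad(a)\,\ad(\mathfrak g)\,\ad(a)=0$. The content of Zel'manov's method is that this, together with $\ad(a)^2=0$, allows one to attach to the $\ad(a)$ a Jordan algebra that is nil of \emph{bounded} index and generated by finitely many elements; Zel'manov's solution of the Jordan analogue of the restricted Burnside problem --- a finitely generated nil Jordan algebra of bounded index is nilpotent --- then shows this Jordan algebra is nilpotent, so that all sufficiently long products of the $\ad(a)$'s vanish. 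Combined with the propagation lemmas, this forces all sufficiently long left-normed brackets of the generators of $\mathfrak g$ to vanish, which is the asserted nilpotency.

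The hard part is precisely this Lie--Jordan interface: extracting from nilpotency of the auxiliary Jordan (and associative) structure the vanishing of Lie monomials of bounded length in $\mathfrak g$ itself requires careful bookkeeping with left-normed brackets and repeated linearization of the sandwich identities, and is the technical core of the Kostrikin--Zel'manov ``connection''; moreover the Jordan input it invokes is itself a major theorem. In the present application $\mathfrak g=U$ is finite-dimensional over $GF(2)$ and, as will be verified, every element of $U$ is a sandwich, so some simplifications may be available; but the logical skeleton --- bounded-index nil Jordan algebra $\Rightarrow$ nilpotent $\Rightarrow$ the $\ad$-operators generate a nilpotent associative algebra $\Rightarrow$ $\mathfrak g$ nilpotent --- would be unchanged.
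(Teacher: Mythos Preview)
The paper does not prove this proposition at all: it is quoted as the ``crust of a thin sandwich'' theorem of Zel'manov and Kostrikin and simply cited from \cite{ZK}, then used as a black box in the subsequent Lemma. So there is nothing to compare your argument against; the paper's ``proof'' is a reference.

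Your sketch is a reasonable high-level outline of the actual Kostrikin--Zel'manov strategy (reduction to algebras over a field via the graded free object, propagation lemmas for sandwich elements, and the passage to a nil Jordan algebra of bounded index invoking Zel'manov's structure theory), and you correctly flag that the Lie--Jordan interface and the Jordan nilpotency input are themselves major theorems. For the purposes of this paper, however, none of that is needed: the authors treat the result as an external input, and your remark that in the application $U$ is finite-dimensional over $GF(2)$ with every element a sandwich is exactly how the paper uses it, via the short Lemma immediately following the proposition.
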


\begin{lemma}
\lemlabel{sandwich2}
Let $\mathfrak{g}$ be a Lie ring satisfying \eqnref{premedial}. If $\mathfrak{g}$ is generated by finitely many elements of the form $[x,y]$, then $\mathfrak{g}$ is nilpotent.
\end{lemma}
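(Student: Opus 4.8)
The plan is to verify the hypotheses of Proposition~\prpref{sandwich}. Since $\mathfrak g$ is generated by finitely many elements of the form $a=[x,y]$, it suffices to show that each such $a$ is a \emph{thin sandwich}, that is, that $\ad(a)^2=0$ and $\ad(a)\ad(z)\ad(a)=0$ for all $z\in\mathfrak g$; the theorem of Zel'manov and Kostrikin then delivers nilpotency of $\mathfrak g$.

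First I would extract two consequences of \eqnref{premedial}. Read elementwise, $\ad(u)\ad([u,w])=0$ says that $[[v,u],[u,w]]=0$ for all $u,v,w\in\mathfrak g$; since $[\mathfrak g,u]=\{\,[g,u]\mid g\in\mathfrak g\,\}$ is an additive subgroup of $\mathfrak g$ closed under negation, and both $[v,u]$ and $[u,w]$ range over it as $v,w$ range over $\mathfrak g$, this is exactly the statement that $[\mathfrak g,u]$ is an abelian subring of $\mathfrak g$, for every $u$. Specializing the pair $(u,w)$ to $(x,y)$ and to $(y,x)$ and using $[y,x]=-[x,y]$ gives in addition $\ad(x)\ad([x,y])=\ad(y)\ad([x,y])=0$.

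Now fix a generator $a=[x,y]$. From the Jacobi identity, written as $\ad(a)=\ad(x)\ad(y)-\ad(y)\ad(x)$, together with the two relations just recorded, I get $\ad(a)^2=\ad(x)\bigl(\ad(y)\ad(a)\bigr)-\ad(y)\bigl(\ad(x)\ad(a)\bigr)=0$. For the thin-sandwich identity I would evaluate $\ad(a)\ad(z)\ad(a)$ on an arbitrary $v\in\mathfrak g$, that is, examine $[[[v,a],z],a]$, and expand $[v,a]=[[v,x],y]-[[v,y],x]$ by Jacobi. The first summand lies in $[\mathfrak g,y]$ and the second in $[\mathfrak g,x]$; since $a=[x,y]=-[y,x]$ lies in both $[\mathfrak g,x]$ and $[\mathfrak g,y]$, the abelian-subring fact annihilates the $[s,a]$-type terms that appear when $a$ is pushed through a $z$-bracket via Jacobi, collapsing $[[[v,a],z],a]$ down to $[[v,a],[z,a]]$. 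Finally $[v,a]$ and $[z,a]$ both lie in the abelian subring $[\mathfrak g,a]$, so $[[v,a],[z,a]]=0$. Thus every generator is a thin sandwich and Proposition~\prpref{sandwich} applies.

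I expect the thin-sandwich identity $\ad(a)\ad(z)\ad(a)=0$ to be the only real obstacle: purely formal manipulation with \eqnref{premedial} and the Jacobi identity runs in a circle (one keeps recovering $\ad(a)\ad(z)\ad(a)$ itself), so it seems essential to open up the generator $a=[x,y]$ and keep careful track, for each intermediate bracket, of which of the three abelian subrings $[\mathfrak g,x]$, $[\mathfrak g,y]$, $[\mathfrak g,a]$ it belongs to. That bookkeeping is the delicate point; the rest is routine.
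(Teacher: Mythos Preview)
Your argument is correct, but your route to the thin-sandwich identity is more laborious than necessary, and your diagnosis of why is mistaken. You claim that ``purely formal manipulation with \eqnref{premedial} and the Jacobi identity runs in a circle,'' so you open up $a=[x,y]$ and track membership in the abelian subrings $[\mathfrak g,x]$, $[\mathfrak g,y]$. In fact the direct manipulation works in two lines: once $\ad(a)^2=0$ is in hand (and your proof of that is fine, essentially identical to the paper's), a single application of Jacobi to the last two factors gives
\[
\ad(a)\ad(z)\ad(a)=\ad(a)\bigl(\ad(a)\ad(z)+\ad([z,a])\bigr)=\ad(a)^2\ad(z)+\ad(a)\ad([z,a]),
\]
and the second term vanishes by \eqnref{premedial} applied with $u=a$ and $w=z$ (nothing in \eqnref{premedial} requires $u$ to be one of the ``basic'' elements $x,y$; it holds for all $u$, in particular for $u=[x,y]$). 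This is exactly what the paper does. Your detour through $[\mathfrak g,x]$ and $[\mathfrak g,y]$ is valid and does reach the same reduction $[[[v,a],z],a]=[[v,a],[z,a]]$, but it buys nothing over the two-line argument; the ``delicate bookkeeping'' you anticipate is simply not needed.
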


\begin{proof}
We have
\begin{equation}
\eqnlabel{sandtmp}
\ad([x,y])^2 = \ad(x)\ad(y)\ad([x,y]) + \ad(y)\ad(x)\ad([y,x]) = 0\,,
\end{equation}
by \eqnref{premedial}. Also,
\[
\ad([x,y])\ad(z)\ad([x,y]) = \ad([x,y])^2 \ad(z) + \ad([x,y])\ad([z,[x,y]]) = 0\,,
\]
using \eqnref{sandtmp} and \eqnref{premedial}. The conditions of Proposition \prpref{sandwich} are satisfied, and so $\mathfrak{g}$ is nilpotent.
\end{proof}

Returning to our Lie algebra $(U,+,[\cdot,\cdot])$, we now obtain a contradiction as follows. Since $(U,+,[\cdot,\cdot])$ is simple, we have $[U,U] = U$. Thus $(U,+,[\cdot,\cdot])$ is generated by finitely many elements of the form $[x,y]$. By Lemmas \lemref{lie_alg} and \lemref{sandwich2}, $(U,+,[\cdot,\cdot])$ is nilpotent, a contradiction.

We have seen that a simple, commutative, automorphic loop of exponent $2$ cannot be nonassociative, and hence must be a cyclic group of order $2$. This completes the proofs of Theorem \thmref{main} (by Proposition \prpref{composite}) and of Theorem \thmref{2loop} (by Theorem \thmref{reduce}).

\section{Final remarks}

We note that the converse of Theorem \thmref{2power} is an immediate consequence of Theorem \thmref{2loop}: if $Q$ is an automorphic $2$-loop, then since $Q$ is solvable, it follows from the same argument as in group theory that $Q$ has a subloop $S$ of index $2$. We have $x^2\in S$ for all $x\in Q$, and then by an induction argument, $x^2$ must have $2$-power order. Thus so does $x$ and hence every element of $Q$ has $2$-power order.

A subloop $S$ of a loop $Q$ is \emph{characteristic} if it is invariant under $\Aut(Q)$. Clearly every characteristic subloop of an automorphic loop is normal. In fact, standard facts about characteristic subgroups of groups hold for characteristic subloops of automorphic loops with essentially identical proofs. For instance, a characteristic subloop $T$ of a normal subloop $S$ of an automorphic loop $Q$ is necessarily normal in $Q$.

The \emph{derived subloop} $Q'$ of a loop $Q$ is the smallest normal subloop of $Q$ such that $Q/Q'$ is an abelian group. The derived subloop is characteristic. It follows from the above remarks that if $Q$ is automorphic, then each higher derived subloop $Q''$, $Q'''$, \emph{etc.} is normal in $Q$. In particular, the derived series of a solvable automorphic loop is a \emph{normal} series $Q \unrhd Q' \unrhd Q'' \unrhd \cdots\unrhd Q^{(n)} = 1$.

In a similar vein, we note that a minimal normal subloop of a finite automorphic loop is a direct product of isomorphic simple automorphic loops. Indeed, the same argument that works for groups (\emph{e.g.}, \cite{Huppert}, p. 51) applies without change. Thus a minimal normal subloop of a finite solvable automorphic loop is an elementary abelian $p$-group. These remarks may prove helpful in settling one of the main remaining open problems in the theory of commutative automorphic loops:

\begin{problem}
\prblabel{Sylow}
Let $Q$ be a commutative automorphic loop. For each prime $p$, does $Q$ have a Sylow $p$-subloop? For each set of primes $\pi$, does $Q$ have a Hall $\pi$-subloop?
\end{problem}

\end{document}